\documentclass[a4paper,11pt,reqno]{amsart}

\usepackage{standalone}
\usepackage{lmodern}
\usepackage{amsmath,amssymb,amsthm}
\usepackage{mathrsfs}
\usepackage{mathtools}
\usepackage{ifthen}
\usepackage{graphicx}
\usepackage[T1]{fontenc}
\usepackage{tikz}
\usepackage{comment}
\usepackage{todonotes}
\usepackage{xcolor}
\usepackage{enumitem}
\usepackage{esint}

\usetikzlibrary{arrows}
\usetikzlibrary{shapes}
\usetikzlibrary{patterns}
\usetikzlibrary{decorations}

\usepackage[
    colorlinks=true,
    linkcolor=red,
    citecolor=blue,
    urlcolor=blue
]{hyperref}

\usepackage{cleveref}

{\scriptsize\ttfamily\color{black!35}}

\numberwithin{equation}{section}

\newtheorem{thm}{Theorem}[section]
\newtheorem*{thm*}{Theorem}

\newtheorem{lem}[thm]{Lemma}

\newtheorem{cor}[thm]{Corollary}

\theoremstyle{definition}

\newtheorem{defn}[thm]{Definition}

\newtheorem{prob}[thm]{Problem}
\newtheorem{rem}[thm]{Remark}

\newcounter{minutes}
\divide\time by 60

\newcounter{hours}
\multiply\time by 60
\addtocounter{minutes}{-\time}

\newcounter{own}[section]

\renewcommand{\theown}{\thesection.\arabic{own}}

\newenvironment{pf}[1][]
{
    \vskip 3mm
    \noindent
    \ifthenelse{\equal{#1}{}}
    {{\slshape Proof. }}
    {{\slshape #1.} }
}
{
    \qed
    \bigskip
}

\newcounter{alphabet}

\def\be{\begin{equation}}
\def\ee{\end{equation}}

\newcommand{\bee}{\begin{enumerate}}
\newcommand{\eee}{\end{enumerate}}

\newcommand{\blem}{\begin{lem}}
\newcommand{\elem}{\end{lem}}

\newcommand{\bthm}{\begin{thm}}
\newcommand{\ethm}{\end{thm}}

\newcommand{\bcor}{\begin{cor}}
\newcommand{\ecor}{\end{cor}}

\newcommand{\beg}{\begin{examp}}
\newcommand{\eeg}{\end{examp}}

\newcommand{\begs}{\begin{examples}}
\newcommand{\eegs}{\end{examples}}

\newcommand{\bdefe}{\begin{defin}}
\newcommand{\edefe}{\end{defin}}

\newcommand{\bprob}{\begin{prob}}
\newcommand{\eprob}{\end{prob}}

\newcommand{\bei}{\begin{itemize}}
\newcommand{\eei}{\end{itemize}}

\newcommand{\D}{\mathbb D}
\newcommand{\HH}{\mathcal H}

\newcommand{\Mod}{\operatorname{Mod}}
\newcommand{\dist}{\operatorname{dist}}

\title{Characterizations for quasiconformal weighted Bergman spaces}
\author{Pekka Koskela, Chengxi Li, Abhishek Pandey}

\address{P.\ Koskela: Department of Mathematics and Statistics, University of Jyv\"askyl\"a, P.O. Box 35, FI-40014, University of Jyv\"askyl\"a, Finland. {\tt pekka.j.koskela@jyu.fi }}
\address{C.\ Li: Department of Mathematics and Statistics, University of Jyv\"askyl\"a, P.O. Box 35, FI-40014, University of Jyv\"askyl\"a, Finland. {\tt cheli@jyu.fi }} 
\address{A.\ Pandey: Department of Mathematics and Statistics, University of Jyv\"askyl\"a, P.O. Box 35, FI-40014, University of Jyv\"askyl\"a, Finland. {\tt pandey.a.pandey@jyu.fi}}

\subjclass[2020]{Primary 30C62, 30H20; Secondary 30C35, 46E30}
\keywords{weighted Bergman spaces, quasiconformal mappings, radial limits, level sets}
\thanks{P.K. was supported by the Academy of Finland
via Centre of Excellence in Analysis and Dynamics Research, project number 323960.
C.L. was supported by the Research Council of Finland, project numbers 364210 and 360505, and China Scholarship Council Fellowship, project number 202306200022. A.P. was supported by the Research Council of Finland, project number 360505.}

\begin{document}

\begin{abstract}
We establish characterizations of planar quasiconformal mappings in weighted Bergman spaces $A_\alpha^p(\mathbb D)$. For $p>0$ and $\alpha>-1$, our characterization involves the convergence of certain integrals involving superlevel set areas, the Hausdorff measure of boundary superlevel sets, and the averaged derivative. These include quasiconformal analogues of characterizations established by Karafyllia and Karamanlis for conformal mappings. Moreover, for $-1<\alpha\leq 0$, we obtain an additional equivalent characterization in terms of the Hausdorff measure of level sets. The power $\alpha$ is sharp: for every $\alpha>0$, we construct a bounded
$K_\alpha$-quasiconformal mapping for which the weighted level-set
length integral with exponent $\alpha+2$ diverges.
\end{abstract}
	
	\maketitle
	\pagestyle{myheadings}
	\markboth{Characterizations for QC weighted Bergman spaces}{Pekka Koskela, Chengxi Li, Abhishek Pandey}
	
	\section{Introduction}
    \subsection{Background}
    A classical problem in geometric function theory is to characterize mappings that belong to Hardy or weighted Bergman spaces in terms of geometric properties.
    In this paper, we study characterizations for planar quasiconformal mappings in weighted Bergman spaces.

    For $p>0$ and $\alpha>-1$, the weighted Bergman space $A^p_{\alpha}(\mathbb{D})$ consists of holomorphic functions $f$ on the unit disk $\mathbb{D}$ satisfying
    \begin{equation}\label{eq:weightedBergman}
        \|f\|_{A^p_{\alpha}(\mathbb{D})}^p
	:=
	\int_{\mathbb{D}} |f(z)|^p(1-|z|^2)^{\alpha}\,dA(z)<\infty,
    \end{equation}
	where $dA$ is Lebesgue area measure. The related Hardy space $H^p(\D)$ is defined by
    \begin{equation}\label{eq:Hardy}
        \|f\|_{H^p(\mathbb{D})}^p
	:=
	\sup_{0<r<1}\int_0^{2\pi}|f(re^{it})|^p\,dt<\infty.
    \end{equation}

A quasiconformal mapping of the unit disk belongs to $A^p_{\alpha}(\mathbb{D})$ or $H^p(\mathbb{D})$ if \eqref{eq:weightedBergman}
or \eqref{eq:Hardy} holds, respectively. Since $(1-|z|^2)^\alpha\sim (1-|z|)^\alpha$ for every $\alpha>-1$ and $z\in\D$, we may replace $(1-|z|^2)^\alpha$ with the weight $(1-|z|)^\alpha$.

The study of integral means and maximum moduli has its roots in the work of Hardy and Littlewood~\cite{HL32}. For conformal mappings, maximum-modulus characterization for Bergman spaces were established by Baernstein, Girela, and Pel\'aez~\cite{BGP04}, and corresponding weighted analogue were studied by P\'erez-Gonz\'alez and R\"atty\"a~\cite{PR08}.

Another important characterization for conformal mappings in such spaces are in terms of the geometric properties of level sets. Following Poggi-Corradini's harmonic-measure approach to Hardy spaces~\cite{PC97}, Karafyllia~\cite{Kar19} obtained a
characterization in terms of hyperbolic distance. Later, Betsakos, Karafyllia, and Karamanlis~\cite{BKK21} extended the latter criterion to weighted Bergman spaces. Karafyllia and Karamanlis~\cite{KK}     subsequently established some interesting geometric characterizations for weighted Bergman spaces. These motivate the quasiconformal characterizations in this paper.

For quasiconformal mappings, the Hardy theory was initiated by
Zinsmeister~\cite{Zin86} and developed systematically by Astala and
the first author~\cite{AK}. Later on, the first author with Benedict and Li~\cite{BKL} developed a weighted Hardy theory connecting weighted
maximum modulus integrals with boundary maximal functions and averaged
derivatives.
    
\subsection{Main results} 
  Let $f:\D\to\Omega\subsetneq\mathbb C$ be an
orientation-preserving quasiconformal homeomorphism. Set
  $$M(r):=\max_{|z|=r}|f(z)|,\quad \text{for}\,\,\,0\le r<1,$$
and for $r>0$, define
  $$F_r:=\{z\in\mathbb D:|f(z)|=r\},\quad
  U_r:=\{z\in\mathbb D:|f(z)|>r\}.$$
The finite radial limit $f(\omega)=\lim_{r\to1^-}f(r\omega)$ exists
outside a null set $N_f\subset\partial\mathbb D$ of
$\HH^1$-measure zero; see \cite[p.~21]{AK}. We define
\begin{equation}\label{defofEr}
  E_r:=\{\omega\in\partial\mathbb D\setminus N_f:|f(\omega)|>r\},
  \quad m(r):=\mathcal H^1(E_r).
\end{equation}
Here $\mathcal H^1$ denotes one-dimensional Hausdorff measure and
$|U_r|$ denotes the Euclidean area. See Figure 1 for an illustration of the sets $U_r$, $E_r$, and $F_r$,
and their images under a quasiconformal mapping $f:\D\to\Omega$.
Note that in the quasiconformal setting, the level set $F_r$
need not be rectifiable.

Finally, let
  $$a_f(z):=\exp\left(\frac{1}{2|B_z|}
                    \int_{B_z}\log J_f(w)\,dA(w)\right),
  \quad B_z:=B\left(z,\frac{1-|z|}{2}\right),$$
where $J_f$ is the Jacobian determinant of $f$. For conformal $f$, the above $a_f(z)$ coincides with $|f'(z)|$. 

\begin{comment}
Fix an arc $I\subset \partial \D$, we define
		its Carleson tent by
	\begin{equation}\label{eq:Carleson-tent}
		T(I):=
		\left\{
		\rho\omega:
		\omega\in I,\ 0\leq\rho<1,\ 
		1-\rho\leq\HH^1(I)
		\right\}.
	\end{equation}

    We use the hyperbolic distance
		$$
		d_\D(0,z)=\log\frac{1+|z|}{1-|z|}
		$$
		and set
		$$
		d_\D(0,F_r):=\inf_{z\in F_r}d_\D(0,z),
		$$
		with the convention that $d_\D(0,\emptyset)=\infty$. For $t>0$, let $G_r$ be the connected component of
		$\D\setminus F_r$ containing $0$, and set $A_r:=\partial G_r\cap\D=\partial G_r\cap F_r$. We define
		$$
		\omega_\D(0,F_r):=\omega_{G_r}(0,A_r).
		$$
        where $\omega_{G_r}(0,A_r)$ denotes harmonic measure in $G_r$ with pole at 0.
		If $F_r=\emptyset$, we we adopt the conventions 
        $$d_\D(0,F_r):=\infty\quad\text{and}\quad\omega_\D(0,F_r):=0.$$
        Moreover,
		$$
		\omega_\D(0,E_r)=\frac{m(t)}{2\pi}.
		$$
\end{comment}

\begin{figure}[htbp]
\centering
\begingroup

% Precomputed snowflake boundary.
\def\SFSnowflake{
  (-1.2025,-1.2025) -- (-1.09939,-1.2025) -- (-1.05519,-1.29566) -- (-1.011,-1.2025)
  -- (-0.90789,-1.2025) -- (-0.8637,-1.29566) -- (-0.92893,-1.37552) -- (-0.82582,-1.37552)
  -- (-0.78163,-1.46868) -- (-0.73743,-1.37552) -- (-0.63432,-1.37552) -- (-0.69955,-1.29566)
  -- (-0.65536,-1.2025) -- (-0.55225,-1.2025) -- (-0.50806,-1.29566) -- (-0.46386,-1.2025)
  -- (-0.36075,-1.2025) -- (-0.31656,-1.29566) -- (-0.38179,-1.37552) -- (-0.27868,-1.37552)
  -- (-0.23449,-1.46868) -- (-0.29972,-1.54854) -- (-0.39983,-1.52382) -- (-0.35564,-1.61699)
  -- (-0.42087,-1.69684) -- (-0.31776,-1.69684) -- (-0.27357,-1.79001) -- (-0.22938,-1.69684)
  -- (-0.12626,-1.69684) -- (-0.08207,-1.79001) -- (-0.14731,-1.86986) -- (-0.04419,-1.86986)
  -- (0,-1.96303) -- (0.04419,-1.86986) -- (0.14731,-1.86986) -- (0.08207,-1.79001)
  -- (0.12626,-1.69684) -- (0.22938,-1.69684) -- (0.27357,-1.79001) -- (0.31776,-1.69684)
  -- (0.42087,-1.69684) -- (0.35564,-1.61699) -- (0.39983,-1.52382) -- (0.29972,-1.54854)
  -- (0.23449,-1.46868) -- (0.27868,-1.37552) -- (0.38179,-1.37552) -- (0.31656,-1.29566)
  -- (0.36075,-1.2025) -- (0.46386,-1.2025) -- (0.50806,-1.29566) -- (0.55225,-1.2025)
  -- (0.65536,-1.2025) -- (0.69955,-1.29566) -- (0.63432,-1.37552) -- (0.73743,-1.37552)
  -- (0.78163,-1.46868) -- (0.82582,-1.37552) -- (0.92893,-1.37552) -- (0.8637,-1.29566)
  -- (0.90789,-1.2025) -- (1.011,-1.2025) -- (1.05519,-1.29566) -- (1.09939,-1.2025)
  -- (1.2025,-1.2025) -- (1.2025,-1.09939) -- (1.29566,-1.05519) -- (1.2025,-1.011)
  -- (1.2025,-0.90789) -- (1.29566,-0.8637) -- (1.37552,-0.92893) -- (1.37552,-0.82582)
  -- (1.46868,-0.78163) -- (1.37552,-0.73743) -- (1.37552,-0.63432) -- (1.29566,-0.69955)
  -- (1.2025,-0.65536) -- (1.2025,-0.55225) -- (1.29566,-0.50806) -- (1.2025,-0.46386)
  -- (1.2025,-0.36075) -- (1.29566,-0.31656) -- (1.37552,-0.38179) -- (1.37552,-0.27868)
  -- (1.46868,-0.23449) -- (1.54854,-0.29972) -- (1.52382,-0.39983) -- (1.61699,-0.35564)
  -- (1.69684,-0.42087) -- (1.69684,-0.31776) -- (1.79001,-0.27357) -- (1.69684,-0.22938)
  -- (1.69684,-0.12626) -- (1.79001,-0.08207) -- (1.86986,-0.14731) -- (1.86986,-0.04419)
  -- (1.96303,0) -- (1.86986,0.04419) -- (1.86986,0.14731) -- (1.79001,0.08207)
  -- (1.69684,0.12626) -- (1.69684,0.22938) -- (1.79001,0.27357) -- (1.69684,0.31776)
  -- (1.69684,0.42087) -- (1.61699,0.35564) -- (1.52382,0.39983) -- (1.54854,0.29972)
  -- (1.46868,0.23449) -- (1.37552,0.27868) -- (1.37552,0.38179) -- (1.29566,0.31656)
  -- (1.2025,0.36075) -- (1.2025,0.46386) -- (1.29566,0.50806) -- (1.2025,0.55225)
  -- (1.2025,0.65536) -- (1.29566,0.69955) -- (1.37552,0.63432) -- (1.37552,0.73743)
  -- (1.46868,0.78163) -- (1.37552,0.82582) -- (1.37552,0.92893) -- (1.29566,0.8637)
  -- (1.2025,0.90789) -- (1.2025,1.011) -- (1.29566,1.05519) -- (1.2025,1.09939)
  -- (1.2025,1.2025) -- (1.09939,1.2025) -- (1.05519,1.29566) -- (1.011,1.2025)
  -- (0.90789,1.2025) -- (0.8637,1.29566) -- (0.92893,1.37552) -- (0.82582,1.37552)
  -- (0.78163,1.46868) -- (0.73743,1.37552) -- (0.63432,1.37552) -- (0.69955,1.29566)
  -- (0.65536,1.2025) -- (0.55225,1.2025) -- (0.50806,1.29566) -- (0.46386,1.2025)
  -- (0.36075,1.2025) -- (0.31656,1.29566) -- (0.38179,1.37552) -- (0.27868,1.37552)
  -- (0.23449,1.46868) -- (0.29972,1.54854) -- (0.39983,1.52382) -- (0.35564,1.61699)
  -- (0.42087,1.69684) -- (0.31776,1.69684) -- (0.27357,1.79001) -- (0.22938,1.69684)
  -- (0.12626,1.69684) -- (0.08207,1.79001) -- (0.14731,1.86986) -- (0.04419,1.86986)
  -- (0,1.96303) -- (-0.04419,1.86986) -- (-0.14731,1.86986) -- (-0.08207,1.79001)
  -- (-0.12626,1.69684) -- (-0.22938,1.69684) -- (-0.27357,1.79001) -- (-0.31776,1.69684)
  -- (-0.42087,1.69684) -- (-0.35564,1.61699) -- (-0.39983,1.52382) -- (-0.29972,1.54854)
  -- (-0.23449,1.46868) -- (-0.27868,1.37552) -- (-0.38179,1.37552) -- (-0.31656,1.29566)
  -- (-0.36075,1.2025) -- (-0.46386,1.2025) -- (-0.50806,1.29566) -- (-0.55225,1.2025)
  -- (-0.65536,1.2025) -- (-0.69955,1.29566) -- (-0.63432,1.37552) -- (-0.73743,1.37552)
  -- (-0.78163,1.46868) -- (-0.82582,1.37552) -- (-0.92893,1.37552) -- (-0.8637,1.29566)
  -- (-0.90789,1.2025) -- (-1.011,1.2025) -- (-1.05519,1.29566) -- (-1.09939,1.2025)
  -- (-1.2025,1.2025) -- (-1.2025,1.09939) -- (-1.29566,1.05519) -- (-1.2025,1.011)
  -- (-1.2025,0.90789) -- (-1.29566,0.8637) -- (-1.37552,0.92893) -- (-1.37552,0.82582)
  -- (-1.46868,0.78163) -- (-1.37552,0.73743) -- (-1.37552,0.63432) -- (-1.29566,0.69955)
  -- (-1.2025,0.65536) -- (-1.2025,0.55225) -- (-1.29566,0.50806) -- (-1.2025,0.46386)
  -- (-1.2025,0.36075) -- (-1.29566,0.31656) -- (-1.37552,0.38179) -- (-1.37552,0.27868)
  -- (-1.46868,0.23449) -- (-1.54854,0.29972) -- (-1.52382,0.39983) -- (-1.61699,0.35564)
  -- (-1.69684,0.42087) -- (-1.69684,0.31776) -- (-1.79001,0.27357) -- (-1.69684,0.22938)
  -- (-1.69684,0.12626) -- (-1.79001,0.08207) -- (-1.86986,0.14731) -- (-1.86986,0.04419)
  -- (-1.96303,0) -- (-1.86986,-0.04419) -- (-1.86986,-0.14731) -- (-1.79001,-0.08207)
  -- (-1.69684,-0.12626) -- (-1.69684,-0.22938) -- (-1.79001,-0.27357) -- (-1.69684,-0.31776)
  -- (-1.69684,-0.42087) -- (-1.61699,-0.35564) -- (-1.52382,-0.39983) -- (-1.54854,-0.29972)
  -- (-1.46868,-0.23449) -- (-1.37552,-0.27868) -- (-1.37552,-0.38179) -- (-1.29566,-0.31656)
  -- (-1.2025,-0.36075) -- (-1.2025,-0.46386) -- (-1.29566,-0.50806) -- (-1.2025,-0.55225)
  -- (-1.2025,-0.65536) -- (-1.29566,-0.69955) -- (-1.37552,-0.63432) -- (-1.37552,-0.73743)
  -- (-1.46868,-0.78163) -- (-1.37552,-0.82582) -- (-1.37552,-0.92893) -- (-1.29566,-0.8637)
  -- (-1.2025,-0.90789) -- (-1.2025,-1.011) -- (-1.29566,-1.05519) -- (-1.2025,-1.09939)
  -- cycle
}

% Smooth boundary of the image domain.
\def\SFOmega{
  (2.380211,0.000000)
  .. controls (2.394490,0.155784) and (2.331234,0.313532) .. (2.207234,0.439046)
  .. controls (2.083234,0.564560) and (1.901743,0.655961) .. (1.718416,0.711791)
  .. controls (1.535089,0.767621) and (1.353268,0.789382) .. (1.213079,0.810553)
  .. controls (1.072889,0.831725) and (0.974657,0.854920) .. (0.914726,0.914726)
  .. controls (0.854796,0.974533) and (0.831352,1.072466) .. (0.809921,1.212133)
  .. controls (0.788491,1.351800) and (0.766713,1.532896) .. (0.711791,1.718416)
  .. controls (0.656869,1.903935) and (0.567341,2.091229) .. (0.442529,2.224741)
  .. controls (0.317716,2.358254) and (0.158942,2.434559) .. (0.000000,2.428456)
  .. controls (-0.158942,2.422352) and (-0.314704,2.333837) .. (-0.437134,2.197620)
  .. controls (-0.559563,2.061403) and (-0.647699,1.881796) .. (-0.711791,1.718416)
  .. controls (-0.775884,1.555035) and (-0.819011,1.411082) .. (-0.874078,1.308150)
  .. controls (-0.929144,1.205217) and (-0.998593,1.141353) .. (-1.090607,1.090607)
  .. controls (-1.182621,1.039861) and (-1.295454,0.998837) .. (-1.407426,0.940412)
  .. controls (-1.519397,0.881987) and (-1.628214,0.806195) .. (-1.718416,0.711791)
  .. controls (-1.808618,0.617387) and (-1.881325,0.505992) .. (-1.936722,0.385238)
  .. controls (-1.992120,0.264484) and (-2.031672,0.133932) .. (-2.046329,0.000000)
  .. controls (-2.060986,-0.133932) and (-2.048740,-0.271789) .. (-1.994888,-0.396808)
  .. controls (-1.941036,-0.521827) and (-1.843581,-0.631869) .. (-1.718416,-0.711791)
  .. controls (-1.593251,-0.791713) and (-1.442992,-0.840101) .. (-1.310593,-0.875710)
  .. controls (-1.178194,-0.911319) and (-1.067343,-0.937266) .. (-0.993716,-0.993716)
  .. controls (-0.920089,-1.050166) and (-0.882361,-1.140680) .. (-0.848241,-1.269482)
  .. controls (-0.814121,-1.398284) and (-0.779379,-1.563474) .. (-0.711791,-1.718416)
  .. controls (-0.644203,-1.873358) and (-0.543179,-2.013399) .. (-0.417148,-2.097146)
  .. controls (-0.291118,-2.180893) and (-0.143008,-2.207837) .. (0.000000,-2.185005)
  .. controls (0.143008,-2.162172) and (0.279735,-2.092809) .. (0.399204,-2.006932)
  .. controls (0.518672,-1.921055) and (0.622053,-1.819883) .. (0.711791,-1.718416)
  .. controls (0.801529,-1.616949) and (0.877635,-1.513760) .. (0.944546,-1.413612)
  .. controls (1.011457,-1.313464) and (1.068449,-1.216425) .. (1.130454,-1.130454)
  .. controls (1.192459,-1.044483) and (1.261164,-0.971094) .. (1.356392,-0.906312)
  .. controls (1.451621,-0.841531) and (1.575739,-0.784459) .. (1.718416,-0.711791)
  .. controls (1.861093,-0.639123) and (2.020776,-0.548110) .. (2.148065,-0.427277)
  .. controls (2.275353,-0.306443) and (2.365932,-0.155784) .. (2.380211,0.000000)
  -- cycle
}

% Repeating diagonal hatch.
\pgfdeclarepatternformonly{sfhatchfast}
  {\pgfpoint{-.3pt}{-.3pt}}
  {\pgfpoint{3.5pt}{8.3pt}}
  {\pgfpoint{3.2pt}{8pt}}{
    \pgfsetlinewidth{.4pt}
    \pgfpathmoveto{\pgfpoint{3.2pt}{0pt}}
    \pgfpathlineto{\pgfpoint{0pt}{8pt}}
    \pgfusepath{stroke}
  }

\definecolor{boundary}{HTML}{353535}
\definecolor{levelred}{HTML}{EF4148}
\definecolor{faintred}{HTML}{E9A2A5}
\definecolor{hatchgray}{HTML}{969696}

\makebox[\linewidth][c]{%
\resizebox{1.2\linewidth}{!}{%
\begin{tikzpicture}[
  x=1cm,y=1cm,
  line cap=round,line join=round,
  >=stealth,
  every node/.style={font=\large,text=boundary},
  outline/.style={draw=boundary,line width=.85pt},
  redcurve/.style={draw=levelred,line width=.95pt},
  continuation/.style={
    draw=faintred,line width=.75pt,
    dash pattern=on 2pt off 1.3pt
  },
  contact/.style={
    fill=white,draw=boundary,line width=.4pt
  },
  hatch/.style={
    pattern=sfhatchfast,pattern color=hatchgray
  }
]

\path[use as bounding box]
  (-2.8,-3.08) rectangle (9.9,3.02);

% Left panel: shaded region U_r.
\begin{scope}
  \clip (0,0) circle (1.85);
  \path[hatch,even odd rule]
    (0,0) circle (1.85) \SFSnowflake;
\end{scope}

% Complete curve C_r and its solid part F_r.
\draw[continuation] \SFSnowflake;
\begin{scope}
  \clip (0,0) circle (1.85);
  \draw[redcurve] \SFSnowflake;
\end{scope}

% Unit circle and the arcs E_r.
\draw[boundary!55,line width=.65pt]
  (0,0) circle (1.85);

\foreach \a/\b in {
  3.95149/86.04851,
  93.95149/176.04851,
  183.95149/266.04851,
  273.95149/356.04851
}{
  \draw[boundary,line width=1.3pt]
    (\a:1.85)
    arc[start angle=\a,end angle=\b,radius=1.85];
  \filldraw[contact] (\a:1.85) circle (.022);
  \filldraw[contact] (\b:1.85) circle (.022);
}

% Left-panel labels.
\fill[boundary] (0,0) circle (.028);
\node[below] at (.04,-.10) {$0$};
\node[right] at (1.52,-1.54) {$\mathbb D$};

\node at (0,.65) {$F_r$};
\draw[boundary,thin,-]
  (.29,.83) -- (1.192,1.192);

\node[right] at (-2.66,1.12) {$U_r$};
\draw[boundary,thin,-]
  (-2.07,1.12) -- (-1.1,1.33);

\node at (1.17,2.32) {$E_r$};
\draw[boundary,thin,-]
  (1.15,2.12) -- (1.061,1.516);

% Mapping arrow.
\draw[boundary,line width=.85pt,->]
  (2.25,0) -- (3.65,0);
\node at (2.96,.41) {$f$};

% Right panel.
\begin{scope}[shift={(6.40,0)}]

  % Shaded image f_alpha(U_r).
  \begin{scope}
    \clip \SFOmega;
    \path[hatch,even odd rule]
      \SFOmega (0,0) circle (1.86);
  \end{scope}

  % Circle |w|=r.
  \draw[continuation] (0,0) circle (1.86);
  \begin{scope}
    \clip \SFOmega;
    \draw[redcurve] (0,0) circle (1.86);
  \end{scope}

  % Boundary and contact points.
  \draw[outline] \SFOmega;
  \foreach \angle in {22.5,67.5,...,337.5}{
    \filldraw[contact] (\angle:1.86) circle (.022);
  }

  \fill[boundary] (0,0) circle (.028);
  \node[below] at (.03,-.10) {$0$};

  \node[font=\normalsize] at (0,.51)
    {$\Omega=f(\mathbb D)$};

  \node[right] at (0.56,-1.97)
    {$\partial\Omega$};

\end{scope}

% Right-panel labels.
\node[font=\normalsize] at (6.32,2.76)
  {$f(F_r)=\Omega\cap \{z:|z|=r\}$};
\draw[boundary,thin,-]
  (6.85,2.56) -- (6.4,1.86);

\node[font=\normalsize,right] at (8.82,.93)
  {$f(U_r)$};
\draw[boundary,thin,-]
  (8.93,.70) -- (8.49,.12);

% Definitions below the figure.
\node[font=\normalsize] at (3.55,-2.76)
  {$F_r:=\{z\in\D:|f(z)|=r\},\quad
    E_r:=\{\omega\in\partial\mathbb D\setminus N_f:
           |f(\omega)|>r\}.$};

\end{tikzpicture}%
}%
}
\par
\endgroup

\caption{The red curves represent $F_r$
and $f(F_r)$, while the shaded portions are the sets $U_r$ and
$f(U_r)$. The interior snowflake curve
$F_r\subset\mathbb D$ is nonrectifiable.}
\label{fig:snowflake-level-sets}

\end{figure}

It is natural to ask whether the geometric characterizations in
\cite[Theorems~1.1 and~1.2]{KK} extend to quasiconformal mappings. This is not trivial since \cite{KK} heavily relies on conformal invariants which are not quasiconformally invariant. We answer this question as follows.

\begin{thm}\label{mainpositive}
Let $p>0$ and $\alpha>-1$, and let $f:\D\to \Omega$ be a
quasiconformal homeomorphism. Then the following conditions
are equivalent:
\begin{enumerate}
    \item[(1)] $\displaystyle f\in A_{\alpha}^{p}(\mathbb{D})$;
\vspace{2 mm}
\item[(2)] $\displaystyle \int_0^{\infty}r^{p-1} |U_r|^{\frac{\alpha+2}{2}}\,dr<\infty$;
\vspace{2 mm}
    \item[(3)]
        $\displaystyle\int_{0}^{\infty}
        r^{p-1}\mathcal{H}^{1}(E_r)^{\alpha+2}\,dr
        <\infty$;
\vspace{2 mm}
    \item[(4)] $\displaystyle a_f\in A_{\alpha+p}^{p}(\mathbb{D})$.
\end{enumerate}
\vspace{2 mm}
When $-1<\alpha\leq 0$, the above conditions are further
equivalent to:
\vspace{2 mm}
\begin{enumerate}
    \item[(5)]
        $\displaystyle\int_{0}^{\infty}
        r^{p-1}\mathcal{H}^{1}(F_r)^{\alpha+2}\,dr
        <\infty.$
\end{enumerate}
Moreoever, $(5)$ implies $(3)$ for every $\alpha>-1$ and $p>0$.
\end{thm}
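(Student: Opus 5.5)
Before anything else we normalize. Post-composing with a translation changes neither membership in $A^p_\alpha$ nor the convergence of the integrals in (2), (3), (5), up to constants and the behavior for small $r$. So we may assume $f(0)=0$. We also note that each of the integrals in (2), (3), (5) over $0<r<r_0$ is finite, because the integrands are bounded by $r^{p-1}\cdot C$. The plan is then to prove the chain (1)$\Leftrightarrow$(2), (1)$\Leftrightarrow$(4), (3)$\Rightarrow$(1) and (1)$\Rightarrow$(3), and finally (5)$\Rightarrow$(3) together with (3)$\Rightarrow$(5) for $\alpha\le0$.

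\textbf{Step 1: (1)$\Leftrightarrow$(2) via the layer-cake formula.} We write
\[
\int_{\D}|f|^p(1-|z|)^\alpha\,dA=p\int_0^\infty r^{p-1}\mu_\alpha(U_r)\,dr,\qquad \mu_\alpha=(1-|z|)^\alpha dA .
\]
It then suffices to show $\mu_\alpha(U_r)\simeq |U_r|^{(\alpha+2)/2}$ (or at least comparable after a change $r\mapsto cr$).

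For the upper bound in the case $\alpha\ge0$, we use that $U_r$ is contained in a boundary layer: quasisymmetry of $f$ on hyperbolic disks together with $f(0)=0$ forces points of $U_r$ for large $r$ to be near $\partial\D$. We then use a Whitney decomposition. Each Whitney square $Q$ meeting $U_{2r}$, with $\ell(Q)\simeq 1-|z|$, lies in $U_r$ up to a constant-factor loss in $r$. This holds by the Harnack-type distortion $|f|\le C|f(z_Q)|$ on $Q$, which follows from quasisymmetry of $f$ on Whitney squares. Consequently $\mu_\alpha(U_r)\simeq\sum_Q\ell(Q)^{\alpha+2}$ over the Whitney squares in $U_r$. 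Comparing this sum with $\bigl(\sum\ell(Q)^2\bigr)^{(\alpha+2)/2}$ is not automatic. Instead we compare both with $m(r)^{\alpha+2}$, using the shadows of the squares on $\partial\D$.

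\textbf{Step 2: the key geometric lemma, comparing $U_r$, $E_r$ and $F_r$ via the quasiconformal Beurling/Hall-type estimate.} The claim to establish is
\[
m(Cr)^{\alpha+2}\lesssim \mu_\alpha(U_r)\lesssim m(r/C)^{\alpha+2},
\]
and similarly $|U_r|\simeq m(r)^2$ up to rescaling of $r$. The tools are as follows.

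\begin{enumerate}
\item[(i)] \textbf{Lower bound.} For $\omega\in E_{Cr}$ the radial segment near $\omega$ eventually lies in $U_r$. A Vitali covering of $E_{Cr}$ by arcs $I$ whose tents $T(I)$ lie in $U_r$ gives the bound. Tent containment comes from a modulus estimate: the curve family joining $f(T(I))$ to $\{|w|\le r\}$ must have small modulus, and we use the quasi-invariance of modulus.
\item[(ii)] \textbf{Upper bound.} Each component of $U_r$ is a Jordan-type domain bounded by a piece of $F_r$ and an arc of $\partial\D$. A Gehring--Hayman/Hall-type argument for quasiconformal maps (as in \cite{AK}) shows that the part of $U_{Cr}$ not shadowed by $E_r$ has small hyperbolic size. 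Thus $U_{Cr}\subset\bigcup T(I_j)$ with $\sum|I_j|\lesssim m(r)$, and then $\mu_\alpha(\bigcup T(I_j))\lesssim\sum|I_j|^{\alpha+2}\le m(r)^{\alpha+2}$.
\end{enumerate}

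This step is the main obstacle, since the conformal proof in \cite{KK} uses harmonic measure and hyperbolic distance. We plan to replace these with modulus estimates and the Astala--Koskela boundary distortion theory.

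Given the lemma, (1)$\Leftrightarrow$(2)$\Leftrightarrow$(3) follows by the substitution $r\mapsto Cr$ in the integrals. In the other direction, we use $\sum|I|^{\alpha+2}\ge$ the measure of the union only in the manner required by the sign of $\alpha$. For $\alpha<0$ we would instead use $|U_r|^{(\alpha+2)/2}$ with the isoperimetric-type comparison.

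\textbf{Step 3: (1)$\Leftrightarrow$(4).} Here we use the known estimate for quasiconformal maps, $a_f(z)\simeq \dist(f(z),\partial\Omega)/(1-|z|)$ (from \cite{AK}/\cite{BKL}). With it, (4) reads
\[
\int_{\D}\dist(f(z),\partial\Omega)^p(1-|z|)^\alpha\,dA<\infty .
\]
Since $\dist(f(z),\partial\Omega)\le |f(z)|+C$, the implication (1)$\Rightarrow$(4) is immediate. For the converse, we integrate along quasihyperbolic geodesics (radii): $|f(z)|\lesssim \sum_k \dist(f(z_k),\partial\Omega)$ over Whitney points $z_k$ on the radius through $z$. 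A Hardy-type inequality with weight $(1-|z|)^\alpha$ then finishes the argument. In the case $p<1$ we use the geometric decay along the radius (quasisymmetry gives doubling-type control).

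\textbf{Step 4: conditions (5).} Since $f(F_r)=\Omega\cap\{|w|=r\}$ separates $f(U_r)$ from $0$, the set $F_r$ separates $E_r$ from $0$ in $\D$. Projection onto $\partial\D$ is $1$-Lipschitz in the radial direction and surjects onto $E_r$ up to null sets, which gives $\mathcal H^1(F_r)\ge c\,m(r)$. This yields (5)$\Rightarrow$(3) for all $\alpha$.

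For $-1<\alpha\le0$, the converse uses the coarea formula $\int_0^\infty \mathcal H^1(F_r)\,g(r)\,dr=\int|\nabla|f||\,g(|f|)\,dA$. Hölder's inequality with exponent $\alpha+2\le2$ and quasiconformality then give $|\nabla f|^2\le KJ_f$. Dyadically in $r$, we obtain
\[
\int_{r}^{2r}\mathcal H^1(F_s)^{\alpha+2}\,ds\lesssim r^{-(\alpha+1)}\Bigl(\int_{U_r\setminus U_{2r}}|Df|\Bigr)^{\alpha+2}.
\]
This is controlled by $|f(U_r\setminus U_{2r})|$ and $\mu$-type quantities of $U_r$, using the Whitney squares and $a_f$. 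The restriction $\alpha\le0$ enters precisely in the concavity step $\bigl(\sum x_j\bigr)^{\alpha+2}$ versus $\sum x_j^{\alpha+2}$ with $\alpha+2\le2$. Finally we sum over dyadic $r$ against (4).
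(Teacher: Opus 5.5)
\textbf{Gap 1: the proof of (1)$\Rightarrow$(5) in Step~4 fails.} The coarea formula controls only $\int_r^{2r}\HH^1(F_s)\,ds$. Since $\alpha+2>1$, an $L^1$ bound on $[r,2r]$ cannot control $\int_r^{2r}\HH^1(F_s)^{\alpha+2}\,ds$; H\"older gives $r^{-(\alpha+1)}\bigl(\int_r^{2r}\HH^1(F_s)\,ds\bigr)^{\alpha+2}\le\int_r^{2r}\HH^1(F_s)^{\alpha+2}\,ds$, which is the opposite direction. Your dyadic estimate is in fact false. Cauchy--Schwarz and quasiconformality give $\int_{U_r\setminus U_{2r}}|Df|\le\bigl(K\,|f(U_r\setminus U_{2r})|\,|U_r|\bigr)^{1/2}\le2\pi\sqrt K\,r$. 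So your estimate would make $\int_{1/2}^{1}\HH^1(F_s)^{\alpha+2}\,ds$ finite for every quasiconformal map, whereas for $f_\alpha$ in Theorem~\ref{thm:weighted-counterexample} it diverges.

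Two related claims are also wrong. First, $\alpha\le0$ does not enter through concavity: for $\alpha+2\ge1$ one always has $\sum_j x_j^{\alpha+2}\le(\sum_j x_j)^{\alpha+2}$. Second, the small-$r$ part of (5) is not trivially finite, because $\HH^1(F_r)$ need not be bounded there. In the counterexample the divergence comes from $r\in(1-\tau,1)$, all below $\dist(0,\partial\Omega_\alpha)$, where the $F_r$ are closed curves; this also means translating to $f(0)=0$ is not obviously harmless for (5).

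The missing idea is to work on the image side with $g=f^{-1}$. For a.e.\ $r$, slicing gives $\HH^1(F_r)=L(r)=\int_{\Omega\cap S(0,r)}|Dg(w)\tau(w)|\,d\HH^1(w)$. Cauchy--Schwarz then gives $L(r)^2\le\mathcal A(r)\int_{\Omega\cap S(0,r)}(1-|g|)^{-\alpha}\,d\HH^1$, where $\mathcal A(r)=\int_{\Omega\cap S(0,r)}|Dg|^2(1-|g|)^{\alpha}\,d\HH^1$ and $\int_0^\infty r^p\mathcal A(r)\,dr\le K\int_\D|f|^p(1-|z|)^\alpha\,dA$. For large $r$, every component of $F_r$ is an arc ending on $\partial\D$, so $1-|g(w)|\le L(r)$; since $-\alpha\ge0$, this yields $L(r)^{\alpha+2}\le2\pi r\mathcal A(r)$. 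For small $r$, use $(1-|g|)^{-\alpha}\le1$ and $L^{\alpha+2}\le1+L^2$. Both steps need $\alpha\le0$.

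\textbf{Gap 2: the lower bound $m(Cr)^{\alpha+2}\lesssim|U_r|_\alpha$ in Step~2(i).} Both (1)$\Rightarrow$(3) and (2)$\Rightarrow$(3) rest on it. Disjoint Vitali arcs $I_j$ with tents in $U_r$ and $\sum_j|I_j|\gtrsim m(Cr)$ only give $|U_r|_\alpha\gtrsim\sum_j|I_j|^{\alpha+2}$. This can be smaller than $(\sum_j|I_j|)^{\alpha+2}$ by a factor $N^{\alpha+1}$ when there are $N$ arcs of equal length. No pointwise tent-containment estimate repairs this, because the depth at which the radius through an individual $\omega\in E_{Cr}$ enters $U_r$ can be arbitrarily small.

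You need a global constraint, which the paper gets from modulus. Let $a(\omega)\omega$ be the last point of the radius with $|f|\le r$ and set $\delta(\omega)=1-a(\omega)$. The family $\Gamma$ of segments from $a(\omega)\omega$ to the next point where $|f|=2r$ is mapped into $A(0;r,2r)$. Hence $\int_{E_{2r}}\delta^{-1}\,d\HH^1\le2\Mod(\Gamma)\le2K\Mod(f\Gamma)\le4\pi K/\log2$. H\"older gives $\HH^1(E_{2r})^{\alpha+2}\le\bigl(\int_{E_{2r}}\delta^{\alpha+1}\bigr)\bigl(\int_{E_{2r}}\delta^{-1}\bigr)^{\alpha+1}$, and combined with $|U_r|_\alpha\ge\frac{1}{2(\alpha+1)}\int_{E_{2r}}\delta^{\alpha+1}\,d\HH^1$ this yields the bound. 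The case $\alpha=0$ gives (2)$\Rightarrow$(3).

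\textbf{What is sound.}
\begin{itemize}
\item Step~2(ii) is essentially the paper's upper bound: density of $E_r$ in $S(z)$ from \cite{AK}, then a tent estimate.
\item (5)$\Rightarrow$(3) is the paper's projection argument, once you restrict to $r>\max_{|z|=1/2}|f|$ so the projection is Lipschitz.
\item Step~3 is a valid and more direct alternative to the paper's route, which passes through the maximum-modulus theorem of \cite{BKL} and then (3). You sum $|f(z_{k+1})-f(z_k)|\lesssim\dist(f(z_k),\partial\Omega)$ along the radius and apply a discrete Hardy inequality (or $(\sum_k x_k)^p\le\sum_k x_k^p$ if $p<1$). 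This gives (4)$\Rightarrow$(1) without level sets.
\end{itemize}
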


In the setting of conformal mappings, $a_f(z)=|f'(z)|$, and the equivalence of (1) and (4) follows from the classical results on holomorphic functions, see e.g. \cite[page 7]{pelaez2021bergman}. For a $K-$quasiconformal mapping $f$, combining \cite[Lemmas 2.3 and 2.5]{AK} with \cite[Corollary~2.2]{Astala1998} yields
$$a_f\in A_{\alpha+p}^{p}(\mathbb D)
\quad\Longleftrightarrow\quad
|Df|\in A_{\alpha+p}^{p}(\mathbb D)
\quad \text{for}\,\,\alpha>-1,\,\,0<p<\frac{2K}{K-1}.$$
Together with the holomorphic case, this suggests that one should have 
$$f\in A_{\alpha}^{p}(\mathbb{D}) \iff |Df|\in A_{\alpha+p}^{p}(\mathbb{D})\quad \text{for}\,\,\alpha>-1,\,\,0<p<\frac{2K}{K-1},$$
when $f$ is $K-$quasiregular. This turns out to be the case; see \cite{KLPqr}.

Note that Theorem~\ref{mainpositive} uses the Hausdorff measure of $F_r$, whereas \cite[Theorem 1.1]{KK} uses its harmonic measure. The Hausdorff measure provides an additional characterization, since the harmonic measure characterization also extends to quasiconformal settings. More precisely, for $p>0$ and $\alpha>-1$,
$$
f\in A_\alpha^p(\D)
\quad\Longleftrightarrow\quad
\int_0^\infty
r^{p-1}\omega_\D(0,F_r)^{\alpha+2}\,dr<\infty;
$$ 
We leave the verification to the interested reader. 

The equivalence of (1) and (5) is more delicate. For conformal mappings, it holds for every $p>0$ and $\alpha>-1$.
The key additional estimate is
\begin{equation}\label{eq:Hausdorff-to-harmonic}
    \HH^1(F_r)\le C\omega_\D(0,F_r),
    \quad \text{for}\,\,r>|f(0)|,
\end{equation}
where $C$ is an absolute constant. To establish \eqref{eq:Hausdorff-to-harmonic}, first apply Hayman-Wu to get local-length control. Then local-length control and a Green function argument gives the desired harmonic measure estimate. However, the Hayman--Wu length bound fails for general quasiconformal mappings \cite[Theorem 1.1]{AstalaFernandezRohde1993}, even if the bound is allowed to
depend on the mapping. Indeed, our snowflake construction below gives a single bounded quasiconformal
mapping for which $\HH^1(F_r)$ is unbounded as $r$ varies. 

The same construction shows that the range $-1<\alpha\le0$ is optimal in the quasiconformal setting. Specifically, the following theorem shows that, for $\alpha>0$,
condition~(1) need not imply condition~(5), even for bounded
quasiconformal mappings.

\begin{thm}\label{thm:weighted-counterexample}
For every $\alpha>0$ there exist a $K_\alpha\geq1$ and a bounded
$K_\alpha-$quasiconformal homeomorphism $f_\alpha:\D\to\Omega_\alpha$ with $f_\alpha(0)=0$ such that for every $p>0$, $f_\alpha\in A_\alpha^p(\mathbb{D})$ but
$$
\int_0^\infty
r^{p-1}\HH^1(F_r)^{\alpha+2}\,dr=\infty.
$$
\end{thm}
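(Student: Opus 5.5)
Since $f_\alpha$ is bounded, say $|f_\alpha|\le R$, the sets $U_r$ are empty for $r\ge R$. Condition (2) of Theorem~\ref{mainpositive} then reduces to $\int_0^R r^{p-1}|U_r|^{(\alpha+2)/2}\,dr\le \pi^{(\alpha+2)/2}R^p/p<\infty$, so $f_\alpha\in A^p_\alpha(\D)$ for every $p>0$. (Directly, $\int_\D R^p(1-|z|)^\alpha\,dA<\infty$ also works.) All the work therefore goes into constructing a bounded quasiconformal map for which $r\mapsto \HH^1(F_r)$ blows up fast enough near some radius $r_0$ that $\int r^{p-1}\HH^1(F_r)^{\alpha+2}\,dr=\infty$. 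Since $r^{p-1}$ is bounded above and below near $r_0>0$, this divergence is independent of $p$. It suffices to get
$$\HH^1(F_r)\gtrsim |r-r_0|^{-\beta}\quad\text{for } r \text{ in a set of positive density near } r_0,\qquad \beta(\alpha+2)\ge 1.$$
For instance $\beta=1/(\alpha+2)$ suffices, and one can even aim at infinite length.

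The construction follows Figure~\ref{fig:snowflake-level-sets}. Take $\Omega_\alpha$ to be a smooth (or at least quasidisk) domain containing $\overline{B(0,r_0)}$ except for small ``contact'' arcs. Then build $f_\alpha$ as a composition $g\circ\phi$. Here $g:\D\to\Omega_\alpha$ is a conformal or bi-Lipschitz map, and $\phi:\D\to\D$ is a quasiconformal self-map fixing $0$ that sends the circle-level curves of $g$ to snowflake-like curves $C_r$. Concretely, $\phi^{-1}$ maps a neighborhood of the von Koch type curve $S$ in the picture onto a neighborhood of a round circle. The snowflake, a quasicircle, is the image of a circle under a global quasiconformal map $h$ of $\mathbb C$.

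To get level sets whose length varies with $r$, I would use a one-parameter family of pre-fractal snowflakes $S_r=h_r(\{|z|=r\})$. These are obtained by a single quasiconformal map $H$ of an annulus $\{r_0-\delta<|z|<r_0\}$ that is ``stacked.'' For $r\in[r_0-2^{-n},r_0-2^{-n-1}]$, the curve $H(\{|z|=r\})$ is the $n$-th generation snowflake approximant, whose length is $\asymp (4/3)^n$, interpolated between consecutive generations. Dilatation stays bounded because consecutive approximants differ only by bumps of size $3^{-n}$ over a radial gap $2^{-n}$. To keep this legitimate I would make the radial gaps comparable to the bump size, i.e.\ use gaps $\asymp 3^{-n}$. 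The generation-$n$ level curves then occupy an $r$-interval of length $\asymp 3^{-n}$ with length $\asymp (4/3)^n$. The contribution is $\sum_n 3^{-n}(4/3)^{n(\alpha+2)}$, which diverges when $(4/3)^{\alpha+2}\ge 3$, i.e.\ $\alpha+2\ge \log 3/\log(4/3)\approx 3.82$.

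For general $\alpha>0$, I would tune the snowflake. I would replace the $1/3$ von Koch rule with a Koch curve of scaling $\lambda$ and $N$ pieces, so that $\HH^1$ grows like $(N\lambda)^n$ while the gap is $\lambda^n$. The divergence condition becomes $(N\lambda)^{\alpha+2}\ge \lambda^{-1}$. Taking bump heights close to maximal (the dimension close to $2$) makes $N\lambda$ large relative to $\lambda^{-1/(\alpha+2)}$, which is possible for every $\alpha>0$ at the cost of a dilatation $K_\alpha$ that blows up as $\alpha\to0$. This is consistent with Theorem~\ref{mainpositive}(5) holding for $\alpha\le0$. The level curves must meet $\partial\D$ only at the contact arcs, where $\partial\Omega_\alpha$ crosses $\{|w|=r\}$, so that $F_r=C_r\cap\D$ retains a fixed proportion of the snowflake.

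The main obstacle is the rigorous quasiconformal construction of the stacked map $H$ with uniformly bounded dilatation together with the exponent bookkeeping. I would build it piecewise, using affine or bi-Lipschitz maps on each generation annulus matched along the approximants via self-similarity, so that each piece is a rescaled copy of one fixed model map. That fixes $K_\alpha$ independently of $n$. The bookkeeping is to confirm that the measure of the $r$-interval carrying generation $n$ is $\asymp\lambda^n$, so that the resulting sum $\sum_n\lambda^n(N\lambda)^{n(\alpha+2)}$ indeed diverges.
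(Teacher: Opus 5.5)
There is a genuine gap in the step that carries all the weight: the claim that your stacked map $H$ has bounded dilatation when the generation-$n$ level curves occupy an $r$-interval of length comparable to the \emph{image} bump size ($3^{-n}$, or $\lambda^n$ for a Koch curve with $N$ pieces of ratio $\lambda$). The variable $r$ lives in the domain of $H$. There the relevant scale is the \emph{parameter} scale $N^{-n}$ of a generation-$n$ arc, and this is forced. Let $A_n=\{r_0-a_n<|z|<r_0-a_{n+1}\}$ carry generation $n$, and put $g_n:=a_n-a_{n+1}$. Then $H(A_n)$ lies in a $C\lambda^n$-neighbourhood of the $n$-th approximant, which has area $\lesssim N^n\lambda^{2n}$. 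Meanwhile the circles in $A_n$ go to curves of length $\gtrsim(N\lambda)^n$. The Cauchy--Schwarz/area argument from the proof of Lemma~\ref{lem:level-set-slicing} gives
\[
g_n(N\lambda)^n\lesssim\int_{A_n}|DH|\,dA\le|A_n|^{1/2}\bigl(K\,|H(A_n)|\bigr)^{1/2}\lesssim\bigl(g_nN^n\lambda^{2n}\bigr)^{1/2},
\]
so $g_n\lesssim_K N^{-n}$. This is far smaller than $\lambda^n$, since $N\lambda>1$. Equivalently, in your self-similar scheme each piece would have angular width $\asymp N^{-n}$ but radial width $\asymp\lambda^n$. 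That is an aspect ratio $(N\lambda)^n\to\infty$, so the pieces cannot be rescaled copies of one model map. Your first choice $2^{-n}$ is worse still.

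With the correct gaps the series is $\sum_n N^{-n}(N\lambda)^{n(\alpha+2)}$. It diverges iff $(N\lambda)^{\alpha+2}\ge N$, i.e.\ iff $\log N/\log(1/\lambda)\ge\frac{\alpha+2}{\alpha+1}$. For the standard Koch curve this needs $\alpha+2\ge\log4/\log(4/3)$, not $\log3/\log(4/3)$. Your general threshold $\dim\ge1+\frac1{\alpha+2}$ is too weak: it tends to $3/2$ as $\alpha\to0$. The correct threshold tends to $2$, which is what forces $K_\alpha\to\infty$ via Smirnov's bound. Pushing the dimension toward $2$, as you suggest, does rescue the strategy because $\frac{\alpha+2}{\alpha+1}<2$, but the bookkeeping has to be redone.

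The paper sidesteps building $H$ by hand. It takes the Rohde snowflake with $4$ pieces of ratio $\rho=4^{-\beta}$, $\beta=\frac{\alpha+1}{\alpha+2}$, so the dimension is exactly $\frac{\alpha+2}{\alpha+1}$. The natural parametrization is $\beta$-H\"older bi-Lipschitz, hence quasisymmetric, and Tukia's theorem extends it to a global quasiconformal $\Phi$. Quasisymmetry at $1-r\le\tau4^{-n}$ (the domain scale) then gives $\HH^1(\Phi(r\partial\D))\ge2(4\rho)^n\gtrsim(1-r)^{-1/(\alpha+2)}$.

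Two smaller points. First, with $f=g\circ\phi$ one has $F_r=\phi^{-1}\bigl(g^{-1}(S(0,r))\bigr)$, so it is $\phi^{-1}$, not $\phi$, that must produce the snowflakes. Second, the contact-arc device is unnecessary, along with its unproved claim that $F_r$ keeps a fixed proportion of the curve. Normalizing $\Phi(0)=0$, $\Phi(\overline{\D})\Subset\D$ and setting $f_\alpha=\Phi^{-1}|_{\D}$ makes $F_r=\Phi(r\partial\D)$ the whole curve for $0<r<1$.
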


It would be interesting to know if, for fixed $K$, there exists $\epsilon=\epsilon(K)$ such that for $0<\alpha< \epsilon(K)$, $(1)\implies (5)$ for every $K$-quasiconformal map.

This paper is organized as follows. In Section \ref{sec:Preliminaries}, we introduce the necessary notation, recall some basic properties of quasiconformal mappings, and establish several auxiliary lemmas that play a key role in the proof of the main result and are also of independent interest. Sections \ref{sec:weighted} and \ref{Example} are devoted to the proofs of Theorems \ref{mainpositive} and \ref{thm:weighted-counterexample}, respectively.
	
\section{Preliminaries}\label{sec:Preliminaries}

    \subsection{Notation}
We denote the unit disk in $\mathbb C$ by $\D$ and $\HH^1$ stands
for one-dimensional Hausdorff measure. 
For a measurable set $V\subset\mathbb C$, we write $|V|$
for its Lebesgue area. If $V\subset\D$ and $\alpha>-1$,
we define its weighted area by 
$$|V|_\alpha
:=
\int_V (1-|z|)^\alpha\,dA(z),$$
where $dA$ denotes Lebesgue area.
In particular, $|V|_0=|V|$.

At each point where a mapping $g$ is differentiable,
we denote the operator norm of its differential by
$$|Dg(z)|
:=
\sup_{v\neq 0}\frac{|Dg(z)v|}{|v|}
=
\sup_{|v|=1}|Dg(z)v|.$$
    
For a set $A\subset\D\setminus\{0\}$, we define its radial
projection onto $\partial\D$ by
\begin{equation}\label{eq:radial-projection}
\pi(A)
:=
\left\{
\frac{z}{|z|}:z\in A
\right\}.
\end{equation}

For $0<a<b<\infty$, we set
$$A(0;a,b)
:=
\{z\in\mathbb C:a<|z|<b\}$$
to be the annulus centered at the origin with inner radius $a$
and outer radius $b$.

    \subsection{Quasiconformal mappings}

    \begin{defn}
        A homeomorphism $f:\D\to \Omega\subset\mathbb{C}$ is $K-$quasiconformal if $f$ belongs to the Sobolev class $W_{loc}^{1,2}(\D;\mathbb{C})$ and 
        $$|Df(x)|^2\le KJ_f(x)\quad\text{for almost every} \quad x\in\D.$$
    \end{defn}
    
	We recall the following result, which is the planar version of the quasiconformal Koebe theorem due to Astala and Gehring \cite{AG}.
	\begin{lem}\label{lem:koebe}
		Let $f:\mathbb{D}\to \Omega$ be $K-$quasiconformal. Then there is a constant $C=C(K)$, so that for every $z\in \mathbb{D}$,
		$$
		\dist(f(z),\partial\Omega)/C\le a_f(z)(1-|z|)
		\leq C\dist(f(z),\partial\Omega).
		$$
	\end{lem}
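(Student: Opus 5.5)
The statement to prove is Lemma~\ref{lem:koebe}, the quasiconformal Koebe estimate. I would deduce it from the Astala--Gehring theorem rather than reprove it, and the work is in matching normalizations. Astala and Gehring show that for a $K$-quasiconformal map $f:D\to D'$ between planar domains there is $C=C(K)$ with
$$\frac{1}{C}\,\frac{\dist(f(z),\partial D')}{\dist(z,\partial D)}\le a_f^{AG}(z)\le C\,\frac{\dist(f(z),\partial D')}{\dist(z,\partial D)}.$$
Here
$$a_f^{AG}(z)=\exp\Big(\frac{1}{2|B|}\int_B\log J_f\,dA\Big),$$
where $B$ is a Whitney-type ball $B(z,s\,\dist(z,\partial D))$ with a fixed $s$, typically $s=1/2$ in the planar statement. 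With $D=\D$ we have $\dist(z,\partial\D)=1-|z|$, so $B_z=B(z,(1-|z|)/2)$ is exactly this ball. Multiplying through by $1-|z|$ then gives the claimed two-sided inequality.

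If the reference uses a different fraction $s$, I would show that the averages over $B(z,s(1-|z|))$ and $B(z,(1-|z|)/2)$ are comparable up to $C(K)$. The tool is that $\log J_f$ is a BMO function with norm bounded in terms of $K$ (Reimann), or equivalently that $J_f$ is an $A_\infty$ weight on Whitney balls by Gehring's reverse H\"older inequality. For two comparable balls inside a slightly larger Whitney ball, the averages of $\log J_f$ differ by at most a constant times $\|\log J_f\|_{BMO}$. Exponentiating, the two versions of $a_f$ agree up to a factor $C(K)$.

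If a self-contained argument is wanted, I would argue as follows. The $A_\infty$ property gives
$$a_f(z)^2\simeq \frac{|f(B_z)|}{|B_z|}.$$
Quasisymmetry of $f$ on the Whitney ball $2B_z\subset\D$ shows that $f(B_z)$ is comparable to a disk of radius $\simeq \operatorname{diam} f(B_z)$. It remains to show $\operatorname{diam} f(B_z)\simeq \dist(f(z),\partial\Omega)$. The upper bound holds because $f(B_z)\subset\Omega$ is a quasidisk around $f(z)$ of bounded distortion.

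The lower bound is the main obstacle. It needs a modulus argument: the ring $\D\setminus \overline{B_z}$ separates $B_z$ from $\partial\D$ and has modulus bounded above. Its image separates $f(B_z)$ from $\partial\Omega$. The $K$-quasi-invariance of modulus, together with Teichm\"uller's ring estimate, then forces $\dist(f(z),\partial\Omega)\lesssim \operatorname{diam} f(B_z)$. Combining these three comparisons gives the lemma.
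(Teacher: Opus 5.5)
Your main route is the paper's: the lemma is not proved there but simply quoted from Astala--Gehring, whose average of $\log J_f$ is taken over the ball of radius half the boundary distance, which for the domain $\D$ is exactly $B_z$; so checking this normalization and multiplying by $1-|z|$, as you describe, is all that is involved. In your optional self-contained sketch, however, Teichm\"uller's ring estimate points the wrong way for the last step, since it only bounds moduli from above and so at best gives $\operatorname{mod}(\D\setminus\overline{B_z})\le C$. What actually yields $\dist(f(z),\partial\Omega)\lesssim\operatorname{diam}f(B_z)$ is that $f(\D\setminus\overline{B_z})$ contains the round annulus $\{\operatorname{diam}f(B_z)<|w-f(z)|<\dist(f(z),\partial\Omega)\}$, which separates its complementary components, so $\log\bigl(\dist(f(z),\partial\Omega)/\operatorname{diam}f(B_z)\bigr)\le\operatorname{mod}f(\D\setminus\overline{B_z})\le K\operatorname{mod}(\D\setminus\overline{B_z})\le C$.
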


	\begin{lem}\label{lem:BKL}
    Let $p>0$ and $\alpha>-1$, and let $f:\D \to \Omega$ be a $K$-quasiconformal homeomorphism. Then
    $$\int_0^1M(r)^p(1-r)^{\alpha+1}\,dr<\infty\quad\Longleftrightarrow\quad\int_\D
		a_f(z)^p(1-|z|)^{p+\alpha}\,dA(z)<\infty.$$
	\end{lem}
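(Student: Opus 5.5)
The plan is to compare both sides with a third quantity, the weighted area integral of $\dist(f(z),\partial\Omega)^p$, and to pass between that quantity and the maximum modulus through the quasihyperbolic geometry of $\Omega$.

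\emph{Step 1: reduction via Koebe.} By Lemma~\ref{lem:koebe},
\[
a_f(z)(1-|z|)\sim \dist(f(z),\partial\Omega)=:d(z)
\]
with constants depending only on $K$. Hence the right-hand side is finite if and only if
\[
\int_\D d(z)^p(1-|z|)^{\alpha}\,dA(z)<\infty.
\]
Writing this integral in polar coordinates, it suffices to compare $\int_0^1 M(r)^p(1-r)^{\alpha+1}\,dr$ with $\int_0^1\int_0^{2\pi} d(re^{it})^p\,dt\,(1-r)^\alpha\,dr$.

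\emph{Step 2: the easy direction.} We have $d(z)\le |f(z)|+|f(0)|+d(0)$, so $d(z)\le M(|z|)+C_f$; here one can take $0\in\Omega$ after a translation, or simply absorb the constant. This bounds the $d$-integral by $\int_0^1 M(r)^p(1-r)^\alpha\,dr$, which carries the weight $(1-r)^\alpha$ rather than $(1-r)^{\alpha+1}$, so the crude bound is not enough. Instead, fix $r$ and let $z_r$ be a point with $|z_r|=r$ and $|f(z_r)|=M(r)$. Set $r_k=1-2^{-k}$. For $|z|$ in $[r_k,r_{k+1}]$, points in a Whitney-type region have $d(z)$ comparable to the local value, and quasisymmetry on Whitney disks gives control. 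Summing over the dyadic annuli, what must be shown is
\[
\int_{r_k}^{r_{k+1}}\int_0^{2\pi}d^p\,dt\,dr\;\lesssim\;2^{-k}M(r_{k+2})^p .
\]
This follows from the pointwise bound $d(z)\le |f(z)|+\operatorname{const}$ together with the extra factor $2^{-k}$ from the radial width. The factor $(1-r)^\alpha\cdot(1-r)=(1-r)^{\alpha+1}$ then matches the left-hand side, using monotonicity of $M$ so that $M(r_{k+2})$ is comparable to its average over the next annulus.

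\emph{Step 3: the hard direction, $M$ controlled by the $d$-integral.} This is the main obstacle, because $M(r)$ is attained at a single point while the $d$-integral is an average. The plan is to connect $f(0)$ to $f(z_r)$ through the image of the radial segment $[0,z_r]$ and use a quasihyperbolic estimate: along the Whitney disks $B_j$ on that segment at levels $r_j$,
\[
|f(z_r)-f(0)|\lesssim \sum_j \operatorname{diam} f(B_j)\sim \sum_{j\le k} d(w_j),
\]
where $w_j=r_j\,z_r/|z_r|$. To average, replace $w_j$ by arbitrary points of a Carleson-type box $Q_j$ of side $2^{-j}$ around $w_j$; quasiconformal Harnack-type comparability of $d$ on $Q_j$ gives $d(w_j)\lesssim \bigl(|Q_j|^{-1}\int_{Q_j}d^p\bigr)^{1/p}$. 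For $p\ge1$ apply H\"older with geometric weights $2^{-\epsilon(k-j)}$; for $p<1$ use the $p$-triangle inequality $\bigl(\sum_j a_j\bigr)^p\le\sum_j a_j^p$. Then multiply by $(1-r)^{\alpha+1}$, integrate in $r$, and swap the sums. Each box $Q_j$ is counted for $r$ in the union of its future generations, which contributes a radial weight of order $2^{-j(\alpha+2)}$ up to a geometric factor. The box average carries $2^{2j}$, so the total is comparable to $\int_{Q_j}d^p(1-|z|)^\alpha$, as required.

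The delicate point is that the direction $z_r/|z_r|$ varies with $r$, so a given box may be hit by several $r$. I would handle this by discretizing $r$ into dyadic levels and choosing one maximizing direction per level, which makes the box overlaps bounded. For the argument to close, the geometric decay in $k-j$ needs $\alpha+1>0$, which holds since $\alpha>-1$.
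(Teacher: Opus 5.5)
\textbf{The gap is in Step 2, which is the direction you call easy.} On the dyadic annuli $A_k=\{r_k\le|z|<r_{k+1}\}$, the left-hand side satisfies
\[
\int_0^1 M(r)^p(1-r)^{\alpha+1}\,dr\approx\sum_k 2^{-k(\alpha+2)}M(r_k)^p ,
\]
because the length $2^{-k}$ of $[r_k,r_{k+1}]$ enters there as well. Take your per-annulus bound $\int_{A_k}d^p\,dA\lesssim 2^{-k}M(r_{k+2})^p$ and weight it by $(1-|z|)^\alpha\approx 2^{-k\alpha}$. It only gives $\int_\D d^p(1-|z|)^\alpha\,dA\lesssim\sum_k 2^{-k(\alpha+1)}M(r_{k+2})^p$, which is a factor $2^k\approx(1-r)^{-1}$ too large. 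This is exactly the crude bound by $\int_0^1M(r)^p(1-r)^{\alpha}\,dr$ that you had just rejected: the radial width has been counted on one side only.

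What you actually need is that $d$ (equivalently $|f|$) is comparable to $M(r)$ only on an angular set of measure about $1-r$, so that, after integrating in $r$, the circle average of $d^p$ is of order $(1-r)M^p$. This cannot follow from $d\le|f|+\mathrm{const}$. It is the genuinely quasiconformal content of the lemma; for the Koebe function the gain is precisely the concentration of $d$ near $z=1$. Even the corrected per-annulus target $\int_{A_k}d^p\,dA\lesssim 2^{-2k}M(r_{k+2})^p$ is false for $p<1$: for $f=\mathrm{id}$ the left side is $\approx 2^{-k(1+p)}$. So the bounded part of $d$ must be split off, and only the large values controlled, through level sets.

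\textbf{How to close it with tools already in the paper.} Assume $\int_0^1M^p(1-r)^{\alpha+1}\,dr<\infty$.
\begin{enumerate}
\item The modulus bound \eqref{eq:m-r}, $\HH^1(E_t)\le\frac{4\pi K}{\log 2}(1-\rho_t)$ with $2M(\rho_t)=t$, combined with the Tonelli computation in the proof of Theorem~\ref{thm:f-iff-a_f}, yields condition (3).
\item The upper bound in Lemma~\ref{Urestimate} (which rests on the Astala--Koskela estimate, Lemma~\ref{lem:Astala-Koskela-annulus-estimate}, and the tent Lemma~\ref{lem:tents}), together with Lemma~\ref{cavalieri}, then gives $f\in A^p_\alpha$.
\item Finally $d\le|f|+d_0$ gives the $d$-integral.
\end{enumerate}
None of this uses Lemma~\ref{lem:BKL}, so there is no circularity. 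The paper itself does not prove the lemma; it simply quotes \cite[Theorem~1.1]{BKL}.

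\textbf{Step 3 is essentially sound} and is the routine direction. You chain Whitney disks along $[0,z_r]$ and use Harnack comparability of $d$ on Whitney boxes. Note that these must be Whitney boxes, not full Carleson boxes, on which $d$ is not comparable. The weights $2^{-k(\alpha+2)}$ are summable over $k\ge j$, so each level-$j$ box receives total weight $\lesssim 2^{-j(\alpha+2)}$ however many maximizing directions pass through it. The overlap issue you flag is therefore harmless.
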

	
	This is a part of \cite[Theorem~1.1]{BKL} with $n=2$.
    We next recall the planar case of a result by Astala and the first author \cite[Lemma~4.2]{AK}. For $z\in\D$, set
		$$
		S(z):=\partial\D\cap B\bigl(z,3(1-|z|)\bigr),
		$$
		and let $f(\omega)$ denote the radial limit of $f$ at $\omega$, which
		exists for $\HH^1$-almost every $\omega\in \partial\D$.
	
		\begin{lem}\label{lem:Astala-Koskela-annulus-estimate}
        Let $f:\D \to \Omega$ be a $K$-quasiconformal homeomorphism with $f(z)\neq0$ for all $z\in\D$.  Then, for every $z\in\D$ and $M>1$,
		$$
		\HH^1\left(
		\left\{\omega\in S(z):
		|f(\omega)|<\frac{|f(z)|}{M}
		\right\}\right)
		\leq
		\frac{C(K)}{\log M}\HH^1(S(z)).
		$$
	\end{lem}

	\subsection{Preliminary results}\label{sec:Supporting-lemma}
In this section we prove some lemmas which will be useful towards our main theorems. 
\begin{lem}\label{cavalieri}
Let $p>0$, $\alpha>-1$, and let
$f:\D\to\mathbb C$ be measurable. Then
    \begin{equation}\label{eq:layer-cake}
\int_{\D}|f(z)|^p (1-|z|)^{\alpha}\,dA(z)=p\int_0^\infty r^{p-1}|U_r|_{\alpha}\,dr.
\end{equation}
\end{lem}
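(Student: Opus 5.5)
The plan is to prove \eqref{eq:layer-cake} by Tonelli's theorem applied to the pointwise layer-cake identity. Here $U_r=\{z\in\D:|f(z)|>r\}$, as in the introduction, for an arbitrary measurable $f$. For each fixed $z\in\D$ we have the elementary identity
$$
|f(z)|^p=\int_0^{|f(z)|}p\,r^{p-1}\,dr=p\int_0^\infty r^{p-1}\chi_{\{|f(z)|>r\}}\,dr,
$$
valid since $p>0$ (if $f(z)=0$ both sides vanish). Multiplying by the weight $(1-|z|)^\alpha$ and integrating over $\D$ gives
$$
\int_\D|f(z)|^p(1-|z|)^\alpha\,dA(z)=p\int_\D\int_0^\infty r^{p-1}\chi_{\{|f(z)|>r\}}(1-|z|)^\alpha\,dr\,dA(z).
$$

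The next step is to justify exchanging the order of integration. The integrand $(z,r)\mapsto r^{p-1}(1-|z|)^\alpha\chi_{\{(z,r):|f(z)|>r\}}$ is nonnegative. It is also measurable on $\D\times(0,\infty)$ with respect to the product $\sigma$-algebra, because $(z,r)\mapsto |f(z)|-r$ is measurable, being a difference of measurable functions of the separate variables. Tonelli's theorem therefore applies with no integrability assumption, and both sides may be $+\infty$. After swapping, the inner integral becomes
$$
\int_\D \chi_{U_r}(z)(1-|z|)^\alpha\,dA(z)=|U_r|_\alpha,
$$
which gives the claim.

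There is essentially no obstacle. The only points to note are the product measurability of the superlevel set, and that $(1-|z|)^\alpha$ is a positive locally integrable weight, so $|\cdot|_\alpha$ is a genuine measure. The condition $\alpha>-1$ makes this weighted measure finite on $\D$, although finiteness is not needed for the identity itself.
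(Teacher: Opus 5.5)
Your proposal is correct and follows the paper's approach: apply Tonelli's theorem to the pointwise identity $|f(z)|^p=p\int_0^\infty r^{p-1}\mathbf 1_{\{|f(z)|>r\}}\,dr$, weighted by $(1-|z|)^\alpha$. You also justify the product measurability of $\{(z,r):|f(z)|>r\}$, which the paper leaves implicit.
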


\begin{pf}
Apply Tonelli's theorem to the pointwise identity
$$
|f(z)|^p
=
p\int_0^\infty
r^{p-1}\mathbf1_{\{|f(z)|>r\}}\,dr.
$$
\end{pf}

\begin{comment}
For every $z\in\D$,
$$
|f(z)|^p
=
p\int_0^\infty
r^{p-1}\mathbf 1_{\{|f(z)|>t\}}\,dr.
$$
Multiplying by $(1-|z|)^\alpha$, integrating over $\D$, and applying Tonelli's theorem to the nonnegative integrand gives
$$
\begin{aligned}
	\int_{\D}|f(z)|^p(1-|z|)^\alpha\,dA(z)
	&=
	p\int_{\D}\int_0^\infty
	r^{p-1}\mathbf 1_{\{|f(z)|>t\}}
	(1-|z|)^\alpha\,dr\,dA(z)\\
	&=
	p\int_0^\infty r^{p-1}
	\left(
	\int_{U_r}(1-|z|)^\alpha\,dA(z)
	\right)dr\\
	&=
	p\int_0^\infty r^{p-1}|U_r|_\alpha\,dr.
\end{aligned}
$$   
\end{comment}

Recall that we denote
$$F_r:=\{z\in\D:|f(z)|=r\}=f^{-1}(S(0,r)\cap \Omega)
		\quad\text{for}\quad r>0.$$

\begin{lem}\label{lem:level-set-slicing}
Let $f:\D\to \Omega$ be a $K-$quasiconformal homeomorphism, denote $g:=f^{-1}:\Omega\to\D$, and, for $w\neq0$, let
$\tau(w):=iw/|w|$. We choose a Borel measurable representative of $Dg$ and define
\begin{equation}\label{defLr}
    L(r):=
	\int_{\Omega\cap S(0,r)}
	|Dg(w)\tau(w)|\,d\HH^1(w),
	\quad \text{for}\quad r>0.
\end{equation}
	Then $L:(0,\infty)\to[0,\infty]$ is measurable. Moreover, for almost
	every $r>0$, $F_r$ is countably $1$-rectifiable and
\begin{equation}\label{eq:Hausdorff-F_r-Lr}
    \HH^1(F_r)
	=
	L(r)
	\leq
	\int_{\Omega\cap S(0,r)}
	|Dg(w)|\,d\HH^1(w)<\infty.
\end{equation}
	In particular, $L$ is a measurable representative of the function
	$r\mapsto\HH^1(F_r)$.
\end{lem}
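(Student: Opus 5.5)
The plan is to transfer the problem to the image side. There the level sets are simply arcs of circles $\Omega\cap S(0,r)$, and we slice using polar coordinates. Since $f$ is $K$-quasiconformal, $g=f^{-1}$ is $K$-quasiconformal on $\Omega$. In particular $g\in W^{1,2}_{loc}(\Omega)$, and $g$ is absolutely continuous on almost every circle, the ACL property in polar coordinates. We also use that $g$ satisfies Lusin's condition (N) and is differentiable almost everywhere.

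\textbf{Measurability.} Write $\Omega=\bigcup_k K_k$ with compact exhaustions. The function $(r,\theta)\mapsto |Dg(re^{i\theta})\tau(re^{i\theta})|\,r\,\mathbf 1_\Omega(re^{i\theta})$ is Borel on $(0,\infty)\times[0,2\pi)$, since $Dg$ is a Borel representative and $\Omega$ is open. Tonelli's theorem then gives that
$$L(r)=\int_0^{2\pi}|Dg(re^{i\theta})\tau|\,\mathbf 1_\Omega(re^{i\theta})\,r\,d\theta$$
is a measurable $[0,\infty]$-valued function of $r$.

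\textbf{Finiteness.} For a compact $K_k\subset\Omega$, Fubini applied to $|Dg|\in L^2_{loc}\subset L^1_{loc}$ gives $\int_{K_k\cap S(0,r)}|Dg|\,d\HH^1<\infty$ for a.e. $r$. The obstacle is that $\Omega\cap S(0,r)$ is not compact, so local integrability alone does not suffice. Instead I would use the change of variables for the homeomorphism $g$:
$$\int_\Omega |Dg|^2\,dA\le K\int_\Omega J_g\,dA=K|\D|<\infty,$$
which holds because $g$ satisfies (N). Hence $|Dg|\in L^2(\Omega)$ globally. Fix $R$ and restrict to the bounded set $\Omega\cap B(0,R)$, where $L^2\subset L^1$. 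Fubini in polar coordinates then shows $\int_{\Omega\cap S(0,r)}|Dg|\,d\HH^1<\infty$ for a.e. $r<R$. Letting $R\to\infty$ along integers gives finiteness for a.e. $r$. The inequality $L(r)\le\int|Dg|$ is immediate, since $|Dg\tau|\le|Dg|$.

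\textbf{The identity $\HH^1(F_r)=L(r)$.} This is the heart of the proof. For a.e. $r$, the following hold on $\Omega\cap S(0,r)$ simultaneously:
\begin{itemize}
\item[(i)] $\HH^1$-a.e. point of the circle is a point of differentiability of $g$ at which the classical differential agrees with the Borel representative. The exceptional set is Lebesgue-null in $\Omega$, so by Fubini it meets a.e. circle in an $\HH^1$-null set.
\item[(ii)] $g$ is absolutely continuous on each component arc, by ACL in polar coordinates, which holds for Sobolev maps after fixing a representative. Since $g$ is continuous, it coincides with that representative.
\item[(iii)] The integral is finite.
\end{itemize}
The set $\Omega\cap S(0,r)$ is a countable union of open arcs $\gamma_j$. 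On each $\gamma_j$, the map $\theta\mapsto g(re^{i\theta})$ is an injective absolutely continuous curve with derivative $r\,Dg(re^{i\theta})\tau$ a.e. Because the curve is injective and absolutely continuous, the area formula for curves gives $\HH^1(g(\gamma_j))=\int|\tfrac{d}{d\theta}g|\,d\theta$, its length. Summing over $j$, and using disjointness (which follows from injectivity), we get $\HH^1(F_r)=L(r)$. Each $g(\gamma_j)$ is a rectifiable curve, so $F_r$ is countably $1$-rectifiable.

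I expect the main technical point to be (ii): ensuring absolute continuity along almost every full circle arc, rather than only on compact subarcs. I would handle it by proving absolute continuity on compact subarcs, where finite length follows from the length bound, and then exhausting each $\gamma_j$ by compact subarcs, using monotone convergence for both $\HH^1$ and the integral.
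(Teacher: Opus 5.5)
Your proposal is correct and follows essentially the same route as the paper. Measurability comes from Tonelli in polar coordinates, a global $L^2$ bound on $|Dg|$ comes from change of variables, and ACL slicing along circles gives the tangential derivative $Dg(w)\tau(w)$. The identity then follows from the length formula on compact subarcs, combined with injectivity and exhaustion of the countably many components. The only difference is cosmetic: for finiteness on almost every circle, the paper applies Fubini to $|Dg|^2$ and then Cauchy--Schwarz on $S(0,r)$. You instead apply Fubini directly to $|Dg|\in L^1(\Omega\cap B(0,R))$, and both work equally well.
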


\begin{proof}
Since $g:\Omega\to\mathbb D$ is also $K$-quasiconformal, the change of variables formula gives
$$\int_\Omega |Dg(w)|^2\,dA(w)
    \le K\int_\Omega J_g(w)\,dA(w)
    =K\pi.$$
Consequently, polar coordinates and the Cauchy--Schwarz
inequality imply that, for almost every $r>0$,
$$
\int_{\Omega\cap S(0,r)}|Dg(w)|\,d\HH^1(w)
\le
(2\pi r)^{1/2}
\left(
\int_{\Omega\cap S(0,r)}
|Dg(w)|^2\,d\HH^1(w)
\right)^{1/2}
<\infty.
$$

By the Sobolev chain rule and slicing in local polar
coordinates \cite[Section~4.9.2]{EG}, together with the
continuity of $g$, for almost every such $r$ the restriction
of $g$ to each component of $\Omega\cap S(0,r)$ is locally
absolutely continuous, with tangential derivative
$Dg(w)\tau(w)$. 

Applying the length formula on compact subarcs, using
injectivity, and exhausting the countably many components,
we obtain that $F_r$ is countably $1$-rectifiable and
$$
\HH^1(F_r)
=
\int_{\Omega\cap S(0,r)}
|Dg(w)\tau(w)|\,d\HH^1(w)
=L(r).
$$
The asserted inequality follows from $|\tau(w)|=1$.

Finally, extending $Dg$ by zero outside $\Omega$, we have
$$
L(r)
=
r\int_0^{2\pi}
|Dg(re^{i\theta})(ie^{i\theta})|\,d\theta,
\quad r>0.
$$
Since the integrand is nonnegative and Borel measurable in
$(r,\theta)$, by Tonelli's theorem $L$ is measurable.

\end{proof}

Recall that for a measurable set $V\subset\D$, we denote the weighted area of $V$ by
$$|V|_\alpha:=\int_V(1-|z|)^\alpha\,dA(z),
$$
and let
$$S(z):=\partial\mathbb D\cap B\bigl(z,3(1-|z|)\bigr).$$
	
\begin{lem}\label{lem:tents}
Let $G\subset\partial\mathbb D$ and $V\subset\mathbb D$
be measurable sets satisfying 
\begin{equation}\label{densityassumption}
    \mathcal H^1\bigl(G\cap S(z)\bigr)
    \ge \frac12\mathcal H^1\bigl(S(z)\bigr)
    \quad\text{for every }z\in V.
\end{equation}
	Then, for every $\alpha>-1$, we have
    \begin{equation}\label{eq:weighted-tent}
    |V|_\alpha
    :=\int_V(1-|z|)^\alpha\,dA(z)
    \le \frac{2}{\alpha+1}\,
        \mathcal H^1(G)^{\alpha+2}.
\end{equation}
	\end{lem}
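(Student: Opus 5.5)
The plan is to slice $V$ by circles centred at $0$, control the angular measure of each slice by a one-dimensional maximal-function (rising sun) argument, and then integrate in the radial variable using an a priori bound on how far from $\partial\mathbb D$ the set $V$ can reach.

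\textbf{Polar slicing.} Write $z=\rho\omega$ with $\omega\in\partial\mathbb D$ and $t:=1-\rho\in(0,1]$. Since $dA=\rho\,d\rho\,d\theta$ and $\rho\le1$, Tonelli gives
$$|V|_\alpha\le\int_0^1 t^\alpha\,\mathcal H^1(A_t)\,dt,\qquad A_t:=\{\omega\in\partial\mathbb D:(1-t)\omega\in V\}.$$
So it suffices to bound $\mathcal H^1(A_t)$ and to find the range of $t$ for which $A_t\neq\emptyset$.

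\textbf{Geometry of $S(z)$.} For $z=(1-t)\omega$ I will check that $S(z)$ is an arc centred at $\omega$. Its length $\ell(t)$ depends only on $t$. Using
$$|e^{i\theta}-(1-t)|^2=t^2+4(1-t)\sin^2(\theta/2),$$
the arc $S(z)$ contains all $e^{i\theta}\omega$ with $|\theta|\lesssim 2\sqrt2\,t$, so $\ell(t)\ge 4t$ (when $S(z)$ is not the whole circle; otherwise $\ell=2\pi$).

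\textbf{Range of $t$.} If $A_t\ne\emptyset$, then by \eqref{densityassumption} we have $\mathcal H^1(G)\ge \ell(t)/2\ge 2t$, or $\mathcal H^1(G)\ge\pi$. I will verify this gives
$$t\le T:=\min\{1,\mathcal H^1(G)/2\}$$
for all relevant $t$, with the case $\ell=2\pi$ handled separately and trivially.

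\textbf{Slice estimate.} For $\omega\in A_t$, the arc $S((1-t)\omega)$ contains $\omega$, and on it the average of $\mathbf 1_G$ is at least $1/2$. Hence $A_t$ lies in the set where the uncentred Hardy--Littlewood maximal function of $\mathbf 1_G$ on the circle is at least $1/2$. Riesz's rising sun lemma gives the sharp weak-type constant $1$ for the uncentred maximal function in one dimension; I will apply it after cutting the circle at a point, or by a direct Vitali argument for arcs of equal length. This yields
$$\mathcal H^1(A_t)\le 2\,\mathcal H^1(G).$$

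\textbf{Integration.} Combining the steps,
$$|V|_\alpha\le 2\mathcal H^1(G)\int_0^{T}t^\alpha\,dt=\frac{2}{\alpha+1}\mathcal H^1(G)\,T^{\alpha+1}\le\frac{2}{\alpha+1}\mathcal H^1(G)^{\alpha+2}.$$
The last inequality uses $T\le\mathcal H^1(G)$, which holds since $T\le \mathcal H^1(G)/2$.

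\textbf{Expected difficulties.} The main obstacle is getting the constant exactly $2/(\alpha+1)$. This rests on the sharp weak-type constant in the rising sun step. A crude Vitali covering gives constant $3$ or worse, which would force absorbing the loss elsewhere. In that case I would use the slack from $T\le\mathcal H^1(G)/2$: the factor $2^{-(\alpha+1)}$ helps only when $\alpha\ge0$, so near $\alpha=-1$ the sharp lemma is genuinely needed. A minor point is measurability of $A_t$, which follows from measurability of $V$ via Tonelli.
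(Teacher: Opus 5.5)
\textbf{Gap: the slice estimate.} Your outline matches the structure of the paper's argument: polar slicing, the range $t\le\min\{1,\HH^1(G)/2\}$, a bound on each slice, and radial integration. The steps other than the slice estimate are fine. The gap is in how you justify $\HH^1(A_t)\le 2\HH^1(G)$.

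The uncentred Hardy--Littlewood maximal function in one dimension does not have weak-type constant $1$. The rising sun lemma gives constant $1$ only for the one-sided maximal functions $\sup_{h>0}\frac1h\int_x^{x+h}$; the uncentred maximal function has sharp constant $2$.

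Moreover, the intermediate set you pass through is genuinely too large. If $G$ is an arc of length $\varepsilon$, then $\{M\mathbf 1_G\ge 1/2\}$ contains the arc obtained by extending $G$ by $\varepsilon$ on each side, of length $3\varepsilon$. So no argument through the inclusion $A_t\subset\{M\mathbf 1_G\ge 1/2\}$ can give slice constant $2$. As you noted yourself, a slice constant $c>2$ cannot be absorbed by the factor $2^{-(\alpha+1)}$ when $\alpha$ is near $-1$. With the correct weak-type constant you get $c=4$, which yields the stated bound $\frac{2}{\alpha+1}\HH^1(G)^{\alpha+2}$ only for $\alpha\ge 0$. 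The unspecified ``Vitali argument for arcs of equal length'' does not rescue this, since covering arguments lose constants.

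\textbf{The fix.} At a fixed $t$ there is no maximal function at all. Every arc $S((1-t)\omega)$ is centred at $\omega$ and has the same length $\ell(t)$, so by Tonelli (rotation invariance)
$$\int_{\partial\D}\HH^1\bigl(G\cap S((1-t)\omega)\bigr)\,d\HH^1(\omega)=\ell(t)\,\HH^1(G).$$
For $\omega\in A_t$ you have $1\le 2\HH^1\bigl(G\cap S((1-t)\omega)\bigr)/\ell(t)$. Chebyshev's inequality then gives $\HH^1(A_t)\le 2\HH^1(G)$ with no loss.

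This is exactly the paper's proof. It bounds $\mathbf 1_V(z)$ pointwise by twice the density $\HH^1(G\cap S(z))/\HH^1(S(z))$ and integrates in $\theta$ using the identity above. It then integrates in $\rho$ over $1-\rho\le \HH^1(G)$. With this replacement for the maximal-function step, your argument is complete.
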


    \begin{proof}
For $z=\rho e^{i\theta}\in\D$, the arc
$\{e^{i(\theta+t)}:|t|<1-\rho\}$ lies in $S(z)$.
Hence
$$
\HH^1(S(z))\ge 2(1-|z|),
$$
and \eqref{densityassumption} gives
\begin{equation}\label{inclusionofV}
V\subset
\{z\in\D:1-|z|\le\HH^1(G)\}.
\end{equation}
In particular, if $\HH^1(G)=0$, then $V=\emptyset$. By rotational invariance and Tonelli's theorem, we have
$$
\int_0^{2\pi}
\frac{\HH^1(G\cap S(\rho e^{i\theta}))}
     {\HH^1(S(\rho e^{i\theta}))}\,d\theta
=
\HH^1(G),
\quad 0<\rho<1.
$$
Using \eqref{densityassumption},
\eqref{inclusionofV} and polar coordinates, we obtain
\begin{align*}
|V|_\alpha
&\le
2\int_{\max\{0,1-\HH^1(G)\}}^1
(1-\rho)^\alpha\rho
\int_0^{2\pi}
\frac{\HH^1(G\cap S(\rho e^{i\theta}))}
     {\HH^1(S(\rho e^{i\theta}))}\,d\theta\,d\rho\\
&=
2\HH^1(G)
\int_{\max\{0,1-\HH^1(G)\}}^1
(1-\rho)^\alpha\rho\,d\rho\\
&\le
2\HH^1(G)\int_0^{\HH^1(G)}s^\alpha\,ds\\
&=
\frac{2}{\alpha+1}\HH^1(G)^{\alpha+2}.
\end{align*}
\end{proof}

Recall that 
$$ E_r:=\{\omega\in \partial\D\setminus N_f:|f(\omega)|>r\}$$
for the given quasiconformal homeomorphism $f:\D\to \Omega$.

\begin{lem}\label{lem:shifted-density}
Let $f:\mathbb{D}\to \Omega$ be a $K$-quasiconformal homeomorphism, and set $d_0:=\dist(0,\partial\Omega)$. Then there exists a constant $\Lambda=\Lambda(K)>1$, such that
for every positive $r\ge d_0$ and every $z\in\D$ with
$|f(z)|\ge\Lambda r$, we have
    $$\HH^1(E_r\cap S(z))
    \ge \frac12\HH^1(S(z)).$$
	\end{lem}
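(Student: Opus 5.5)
We will apply Lemma~\ref{lem:Astala-Koskela-annulus-estimate} to a translated mapping that omits the value $0$, and then use the triangle inequality on radial limits. Set $d_0=\dist(0,\partial\Omega)$ and choose a point $b\in\partial\Omega$ with $|b|=d_0$; if $\Omega=\mathbb C$ minus a point, or more generally whenever $\partial\Omega\neq\emptyset$ (which holds since $\Omega\subsetneq\mathbb C$), such a $b$ exists. Consider $h:=f-b$. It is a $K$-quasiconformal homeomorphism of $\D$ onto $\Omega-b$, and $h(z)\neq0$ for all $z\in\D$ because $b\notin\Omega$. Its radial limits exist exactly where those of $f$ do, with $h(\omega)=f(\omega)-b$.

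Fix $r\ge d_0$ and $z\in\D$ with $|f(z)|\ge\Lambda r$, where $\Lambda>3$ will be chosen at the end. The two needed bounds come from the triangle inequality.
\begin{itemize}
\item[(i)] Since $|b|=d_0\le r$, we have $|h(z)|\ge|f(z)|-|b|\ge(\Lambda-1)r$.
\item[(ii)] For $\omega\in S(z)\setminus N_f$ with $|f(\omega)|\le r$, we have $|h(\omega)|\le|f(\omega)|+|b|\le 2r$.
\end{itemize}
Combining (i) and (ii), every such $\omega$ satisfies $|h(\omega)|\le 2r\le \frac{2}{\Lambda-1}|h(z)|<|h(z)|/M$ with $M:=\frac{\Lambda-1}{3}>1$, say. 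Since $N_f$ is $\HH^1$-null, the exceptional set is irrelevant. Hence
$$
S(z)\setminus E_r\subset\Bigl\{\omega\in S(z):|h(\omega)|<\frac{|h(z)|}{M}\Bigr\}\cup N_f .
$$

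Lemma~\ref{lem:Astala-Koskela-annulus-estimate} applied to $h$ then gives
$$
\HH^1(S(z)\setminus E_r)\le\frac{C(K)}{\log M}\HH^1(S(z)).
$$
Now choose $\Lambda=\Lambda(K)$ so large that $\log\frac{\Lambda-1}{3}\ge 2C(K)$. This makes the right-hand side at most $\frac12\HH^1(S(z))$, and therefore $\HH^1(E_r\cap S(z))\ge\frac12\HH^1(S(z))$. The constant depends only on $K$, since $C(K)$ does.

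The only real point is to check that Lemma~\ref{lem:Astala-Koskela-annulus-estimate} applies to $h$ with a constant uniform in $b$. This holds because translation does not change the dilatation, so $C(K)$ is unchanged. Beyond that, the difficulty is simply choosing the translation so that $h$ omits $0$ while $|b|\le r$; this is exactly why the hypothesis $r\ge d_0$ is imposed.
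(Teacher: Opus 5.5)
Your proposal is correct and is essentially the paper's proof. You translate by a nearest boundary point $b$ so that $h=f-b$ omits $0$, bound $|h(z)|\ge(\Lambda-1)r$ and $|h(\omega)|\le 2r$ off $E_r\cup N_f$, and apply Lemma~\ref{lem:Astala-Koskela-annulus-estimate} with $C(K)/\log M\le 1/2$; the paper does the same with $\Lambda=2M_0+2$, and the one nit is that $M=(\Lambda-1)/3>1$ needs $\Lambda>4$ rather than $\Lambda>3$, which your final choice $\log\frac{\Lambda-1}{3}\ge 2C(K)$ already guarantees.
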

	
	\begin{proof}
    We choose
	$y_0\in\partial\Omega$ with $|y_0|=d_0$ and let $h:=f-y_0$. Then $h:\D \to \Omega-y_0$ is $K$-quasiconformal and $h(z)\neq0$ for every $z\in \D$. Let $C(K)$ be the constant in
	Lemma~\ref{lem:Astala-Koskela-annulus-estimate}. Choose a constant $M_0=M_0(K)>1$ such that ${C(K)}/{\log M_0}\leq 1/2$,
		and set $\Lambda=\Lambda(K):=2M_0+2>1$.
    If positive $r\geq d_0$ and $|f(z)|\geq\Lambda r$, then
		$$
			|h(z)|\geq |f(z)|-|y_0|\geq \Lambda r-d_0\geq(2M_0+1)r.$$
	
 For every $\omega\in S(z)\setminus(E_r\cup N_f)$, we therefore have
        $$|h(\omega)|
    =|f(\omega)-y_0|
    \le r+d_0 \le2r
    <\frac{|h(z)|}{M_0}.$$ 
  Applying Lemma \ref{lem:Astala-Koskela-annulus-estimate} to $h$ and using $\HH^1(N_f)=0$ yields that
\begin{align*}
    \HH^1(S(z)\setminus E_r)
    &\le
    \HH^1\left(
    \left\{\omega\in S(z)\setminus N_f:
    |h(\omega)|<\frac{|h(z)|}{M_0}\right\}
    \right)\\
    &\le \frac{C_K}{\log M_0}\HH^1(S(z))\\
    &\le \frac12\HH^1(S(z)).
\end{align*}
Therefore, we conclude that
$$\HH^1(E_r\cap S(z))=\HH^1(S(z))- \HH^1(S(z)\setminus E_r)\ge \frac{1}{2}\HH^1(S(z)),$$
which proves the lemma.
	\end{proof}

Recall that for each $r\ge0$, we defined $$U_r:=\{z\in \mathbb D: |f(z)|>r\}=f^{-1}(\Omega\cap \{|z|>r\}),$$
\begin{lem}\label{Urestimate}
Let $\alpha>-1$, and let $f:\D \to \Omega$ be a $K$-quasiconformal homeomorphism. Then there exist constants $\Lambda=\Lambda(K)>1$ such that
\begin{equation}\label{eq:weighted-shifted-superlevel-upper-bound}
			|U_{\Lambda r}|_\alpha
			\leq
			\frac{2}{\alpha+1}\,\HH^1(E_r)^{\alpha+2},\quad \text{for every } r>0,
		\end{equation}
and
\begin{equation}\label{eq:weighted-shifted-superlevel-lower-bound}
			|U_r|_\alpha
			\geq
	C(K,\alpha)\,\HH^1(E_{2r})^{\alpha+2},\quad \text{for every } r>R_0,
		\end{equation}        
        where $$R_0:=\max_{|z|=\frac{1}{2}}|f(z)|,\quad C(K,\alpha)=\frac{1}{2(\alpha+1)}
\left(\frac{\log 2}{4\pi K}\right)^{\alpha+1}.$$
\end{lem}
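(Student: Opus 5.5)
The plan is to derive the two inequalities separately. The upper bound \eqref{eq:weighted-shifted-superlevel-upper-bound} will come from Lemma~\ref{lem:tents} fed by the density estimate of Lemma~\ref{lem:shifted-density}. The lower bound \eqref{eq:weighted-shifted-superlevel-lower-bound} will come from a modulus estimate along radial segments, followed by Hölder's inequality.

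\emph{Upper bound.} Let $\Lambda=\Lambda(K)$ be the constant of Lemma~\ref{lem:shifted-density} and $d_0=\dist(0,\partial\Omega)$. We split into two cases.
\begin{itemize}
\item If $r\ge d_0$, every $z\in U_{\Lambda r}$ satisfies $|f(z)|>\Lambda r$. Lemma~\ref{lem:shifted-density} then gives $\HH^1(E_r\cap S(z))\ge\frac12\HH^1(S(z))$. Lemma~\ref{lem:tents} with $G=E_r$ and $V=U_{\Lambda r}$ yields the claim directly.
\item If $0<r<d_0$, we use that $f$ is a homeomorphism onto $\Omega$, hence proper. So every existing radial limit lies on $\partial\Omega$, and $|f(\omega)|\ge d_0>r$ for all $\omega\notin N_f$. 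Thus $\HH^1(E_r)=2\pi$. It then suffices to compare with the whole disk: $|U_{\Lambda r}|_\alpha\le|\D|_\alpha=\frac{2\pi}{(\alpha+1)(\alpha+2)}$. This is at most $\frac{2}{\alpha+1}(2\pi)^{\alpha+2}$ because $(2\pi)^{\alpha+1}\ge1$.
\end{itemize}

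\emph{Lower bound: setting up the curves.} Fix $r>R_0$. By the maximum of $|f|$ on $|z|=\frac12$, $f$ maps $\{|z|\le\frac12\}$ onto a set containing $0$ inside $\{|w|\le R_0\}$. Hence $U_r\subset\{\frac12<|z|<1\}$ and $U_r$ omits the disk $\{|z|\le\frac12\}$. For $\omega\in E_{2r}$, consider the radial segment $[\omega/2,\omega)$. On it $|f(\omega/2)|\le R_0<r$, while $|f|\to|f(\omega)|>2r$ at the end. Let $\gamma_\omega$ be the subsegment from the last point where $|f|=r$ to $\omega$. Then $\gamma_\omega\subset U_r$, its length $\ell(\omega)$ satisfies $0<\ell(\omega)\le\frac12$, and $f(\gamma_\omega)$ joins $\{|w|=r\}$ to $\{|w|=2r\}$. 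Let $\Gamma$ be the family of all $\gamma_\omega$.
\begin{itemize}
\item \emph{Modulus from above.} The image family lies in curves crossing the annulus $A(0;r,2r)$, whose modulus is $2\pi/\log 2$. By $K$-quasiconformality, $\Mod\Gamma\le 2\pi K/\log 2$.
\item \emph{Modulus from below.} For any admissible $\rho$, Cauchy--Schwarz gives $1\le\ell(\omega)\int_{\gamma_\omega}\rho^2\,ds$. Integrating in $\theta$ over $E_{2r}$ with $|z|\ge\frac12$ in polar coordinates gives $\int\rho^2\,dA\ge\frac12\int_{E_{2r}}\ell^{-1}\,d\theta$.
\end{itemize}
Combining the two bounds, $\int_{E_{2r}}\ell^{-1}\,d\theta\le\frac{4\pi K}{\log 2}$.

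\emph{Lower bound: area and Hölder.} Each $\gamma_\omega$ ends at $\omega$, so on it $1-|z|$ equals the distance to $\omega$ along the segment. Since the segments are disjoint and contained in $U_r$, polar coordinates with $|z|\ge\frac12$ give
$$|U_r|_\alpha\ge\frac12\int_{E_{2r}}\int_0^{\ell(\omega)}s^\alpha\,ds\,d\theta=\frac{1}{2(\alpha+1)}\int_{E_{2r}}\ell^{\alpha+1}\,d\theta.$$
Hölder's inequality with exponents $\alpha+2$ and $\frac{\alpha+2}{\alpha+1}$, applied to $1=\ell^{\frac{\alpha+1}{\alpha+2}}\ell^{-\frac{\alpha+1}{\alpha+2}}$, gives
$$\HH^1(E_{2r})^{\alpha+2}\le\int\ell^{\alpha+1}\Bigl(\int\ell^{-1}\Bigr)^{\alpha+1}.$$
Inserting the modulus bound yields exactly the constant $C(K,\alpha)$.

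\emph{Main obstacle.} The hardest step is the measure-theoretic care in the modulus argument. One needs measurability of $\omega\mapsto\ell(\omega)$ (it is Borel by continuity of $f$). One also needs to check that $f(\gamma_\omega)$ genuinely crosses the annulus even though $\gamma_\omega$ is open at $\omega$. This holds because the radial limit exceeds $2r$, so $|f|$ attains the value $2r$ on $\gamma_\omega$.
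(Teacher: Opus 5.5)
Your proposal is correct and follows the paper's proof essentially step for step. For the upper bound you use Lemma~\ref{lem:shifted-density} plus Lemma~\ref{lem:tents}, with the case $r<d_0$ handled by $\HH^1(E_r)=2\pi$. For the lower bound you take the radial segments starting at the last point where $|f|=r$, bound $\int_{E_{2r}}\delta^{-1}\le 2\Mod\Gamma\le 4\pi K/\log 2$, estimate the area in polar coordinates, and apply the same H\"older inequality.

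The only differences are cosmetic. The paper cuts each segment at the first point where $|f|=2r$, so that its image lies in the closed annulus. You keep the whole segment and use that its image contains a subcurve crossing the annulus, which gives the same modulus bound. Also, your aside that $f(\{|z|\le\frac12\})$ contains $0$ need not hold, but it is never used; only $|f|\le R_0$ on that disk matters.
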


\begin{proof}
Recall that $d_0:=\dist(0,\partial\Omega)$. 
By Lemma~\ref{lem:shifted-density}, for $r\ge d_0$, we have
$$
\HH^1(E_r\cap S(z))
\geq
\frac12\HH^1(S(z)),\quad z\in U_{\Lambda r} .
$$
Combining with Lemma \ref{lem:tents}, it follows that
		$$
		|U_{\Lambda r}|_\alpha
		\leq
		\frac{2}{\alpha+1}\,\HH^1(E_r)^{\alpha+2}.
		$$

        On the other hand, if $0<r<d_0$, then $\HH^1(E_r)=2\pi$. Thus,
		$$|U_{\Lambda r}|_\alpha\le
		|\D|_\alpha
		=
		\frac{2\pi}{(\alpha+1)(\alpha+2)}\le \frac{2}{\alpha+1}\,\HH^1(E_r)^{\alpha+2}.
		$$
		The last inequality follows from $\alpha>-1$. This gives \eqref{eq:weighted-shifted-superlevel-upper-bound} for every $r>0$. 

    To prove the lower bound \eqref{eq:weighted-shifted-superlevel-lower-bound}, fix $r>R_0$. We may assume that $\mathcal H^1(E_{2r})>0$. For $\omega\in E_{2r}$, define 
$$
a(\omega)
:=\max\{t\in[1/2,1):|f(t\omega)|\leq r\},\quad\delta(\omega):=1-a(\omega).
$$
Then $a(\omega)>1/2$, $|f(a(\omega)\omega)|=r$ and the radial segment $\{t\omega:a(\omega)<t<1\}$ lies in $U_r$.

Let $\Gamma$ consist of the radial segments from $a(\omega)\omega$ to the first point $b(\omega)$ where $|f(b(\omega)\omega)|=2r$. Since $f(\gamma_\omega)\subset A(0;r,2r)$ for every $\gamma_\omega\in \Gamma$, by Cauchy--Schwarz and the quasiconformality of $f$, it follows that

\begin{equation}\label{eq:U_r-modulus}
    \int_{E_{2r}}\frac{d\mathcal H^1(\omega)}{\delta(\omega)} \le 2\operatorname{Mod}(\Gamma) \le  2K\operatorname{Mod}(f\Gamma)\le \frac{4\pi K}{\log 2}.
\end{equation}

Integrating over the radial segments and applying change of variables, gives
\begin{equation}\label{eq:U_r-lower-bound}
    |U_r|_{\alpha}\ge \frac12\int_{E_{2r}}\int_{a(\omega)}^1
(1-t)^\alpha\,dt\,d\mathcal H^1(\omega)\ge\frac{1}{2(\alpha+1)}
\int_{E_{2r}}\delta(\omega)^{\alpha+1}\,d\mathcal{H}^1(\omega).
\end{equation}

Finally, since $\alpha>-1$, by H\"older's inequality, we have
\begin{equation}\label{eq:U_r-Holder}
\mathcal{H}^1(E_{2r})^{\alpha+2}
\leq
\left(
\int_{E_{2r}}\delta^{\alpha+1}\,d\mathcal{H}^1
\right)
\left(
\int_{E_{2r}}\delta^{-1}\,d\mathcal{H}^1
\right)^{\alpha+1}.
\end{equation}
Using \eqref{eq:U_r-modulus} and \eqref{eq:U_r-lower-bound} in \eqref{eq:U_r-Holder} gives \eqref{eq:weighted-shifted-superlevel-lower-bound} with $C(K,\alpha)=\frac{1}{2(\alpha+1)}
\left(\frac{\log 2}{4\pi K}\right)^{\alpha+1}$.
\end{proof}

\section{Proof of Theorem \ref{mainpositive}}\label{sec:weighted}

This section is devoted to the proof of Theorem~\ref{mainpositive}. Our strategy is as follows. Throughout, we assume that $p>0$. For $\alpha>-1$, we establish $(5)\Rightarrow(3)$,
$(1)\Leftrightarrow(3)$, and $(2)\Leftrightarrow(3)$
in subsections~\ref{6implies3}--\ref{sect:areacharact}, respectively.
In subsection~\ref{equ145}, we prove the equivalence of conditions
$(1)$ and $(4)$. In view of $(5)$ implies $(3)$, these results show that $(5)$ implies all the conditions for every $\alpha>-1$.
Finally, in subsection~\ref{1implies6}, we prove $(1)\Rightarrow(5)$
for $-1<\alpha\le0$, completing the proof of Theorem~\ref{mainpositive}.

\subsection{(5) implies (3)}\label{6implies3}
We first show that $(5)\implies (3)$ for $\alpha>-1$.

\begin{thm}\label{thm:(5)implies(3)}
Let $p>0$ and $\alpha>-1$, and let $f:\mathbb{D}\to \Omega$ be a $K$-quasiconformal homeomorphism. Then 
$$
\int_{0}^\infty r^{p-1}\HH^1(F_r)^{\alpha+2}\,dr<\infty\quad\Longrightarrow\quad
\int_0^\infty r^{p-1}\HH^1(E_r)^{\alpha+2}\,dr<\infty.
$$
\end{thm}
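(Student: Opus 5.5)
The plan is to compare $E_r$ with the radial projection $\pi(F_r)$ of the level set, defined in \eqref{eq:radial-projection}, and to treat small and large $r$ separately. The threshold is $R_0:=\max_{|z|=1/2}|f(z)|$, the same constant that appears in Lemma~\ref{Urestimate}.

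\emph{Small $r$.} For $0<r\le R_0$ we only use the trivial bound $\HH^1(E_r)\le 2\pi$. This gives
$$\int_0^{R_0} r^{p-1}\HH^1(E_r)^{\alpha+2}\,dr\le (2\pi)^{\alpha+2}\frac{R_0^p}{p}<\infty .$$
The function $r\mapsto \HH^1(E_r)$ is nonincreasing, so it is measurable and the integral makes sense.

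\emph{Large $r$: the inclusion.} Fix $r>R_0$. We claim that $E_r\subset \pi\bigl(F_r\cap\{|z|\ge 1/2\}\bigr)$. Let $\omega\in E_r$. Then $\omega\notin N_f$, and the radial limit satisfies $|f(\omega)|>r$. Hence there is some $t_1\in(1/2,1)$ with $|f(t_1\omega)|>r$. On the other hand $|f(\omega/2)|\le R_0<r$. The function $t\mapsto|f(t\omega)|$ is continuous on $[1/2,t_1]$, so by the intermediate value theorem there is $t\in(1/2,t_1)$ with $|f(t\omega)|=r$. Then $t\omega\in F_r$, $|t\omega|\ge 1/2$, and $\pi(t\omega)=\omega$, which proves the claim. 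Since $r>R_0$, we have $F_r\cap\{|z|\le 1/2\}=\emptyset$, so in fact $E_r\subset\pi(F_r)$.

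\emph{Large $r$: the length comparison.} On $\{1/2\le|z|<1\}$ the map $\pi(z)=z/|z|$ is $2$-Lipschitz, because $|z/|z|-w/|w||\le 2|z-w|/\max(|z|,|w|)$. Lipschitz maps increase $\HH^1$ by at most the Lipschitz constant, so
$$\HH^1(E_r)\le \HH^1(\pi(F_r))\le 2\,\HH^1(F_r),\qquad r>R_0 .$$
This holds for every $r>R_0$, not only almost every $r$. By Lemma~\ref{lem:level-set-slicing}, $r\mapsto \HH^1(F_r)$ agrees a.e.\ with the measurable function $L$, so the integral in $(5)$ is well defined and we may integrate the inequality. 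We obtain
$$\int_{R_0}^\infty r^{p-1}\HH^1(E_r)^{\alpha+2}\,dr\le 2^{\alpha+2}\int_{R_0}^\infty r^{p-1}\HH^1(F_r)^{\alpha+2}\,dr<\infty .$$
Adding the small-$r$ estimate gives the conclusion of the theorem.

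The only delicate point is making sure the level-set points used lie away from the origin, where $\pi$ fails to be Lipschitz. Restricting to $r>R_0$ and running the intermediate value argument on $[1/2,t_1]$ handles this. There is no real obstacle beyond this; no quasiconformality is used except the existence of radial limits outside the null set $N_f$.
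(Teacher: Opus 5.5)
Your proposal is correct and is essentially the paper's proof: the same threshold $R_0=\max_{|z|=1/2}|f(z)|$, the inclusion $E_r\subset\pi(F_r)$ for $r>R_0$ via the intermediate value theorem along radii, a Lipschitz bound for the radial projection on $\{|z|\ge 1/2\}$, and the trivial bound $\HH^1(E_r)\le 2\pi$ on $(0,R_0]$. One small slip: the inequality you cite, $|z/|z|-w/|w||\le 2|z-w|/\max(|z|,|w|)$, only gives Lipschitz constant $4$ on $\{|z|\ge 1/2\}$, which is the paper's constant. The constant $2$ does hold, for example from $|z/|z|-w/|w||\le |z-w|/\min(|z|,|w|)$, but its value is immaterial.
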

	
\begin{pf}
Recall that we denote $0<R_0:=\max_{|z|=\frac{1}{2}}|f(z)|<\infty$. Since $f$ is continuous and open, $|f(z)|\le R_0$
for $|z|\le1/2$. Thus, for $r>R_0$,
$$
F_r\subset\{z\in\D:|z|>1/2\}.
$$
Moreover, for every $\omega\in E_r$, we have
$$
\left|f\left({\omega}/{2}\right)\right|\le R_0<r<|f(\omega)|.
$$
By the definition of radial limits and the intermediate
value theorem, the radial segment $\{s\omega:1/2<s<1\}$
meets $F_r$. Hence $E_r\subset\pi(F_r)$.

Since the radial projection $\pi(z)=z/|z|$ is $4$-Lipschitz on $\{|z|\ge1/2\}\subset \D\setminus\{0\}$, it follows that
$$
\HH^1(E_r)\le\HH^1(\pi(F_r))\le4\HH^1(F_r),
\quad \text{for every}\,\,\, r>R_0.
$$
By the assumption (5), we know that
$$\int_{R_0}^\infty r^{p-1}\HH^1(F_r)^{\alpha+2}\,dr
<\infty$$
Combining this with $\HH^1(E_r)\le2\pi$, we conclude that
$$
\int_0^\infty r^{p-1}\HH^1(E_r)^{\alpha+2}\,dr
\le
\frac{(2\pi)^{\alpha+2}}{p}R_0^p
+
4^{\alpha+2}
\int_{R_0}^\infty r^{p-1}\HH^1(F_r)^{\alpha+2}\,dr
<\infty,
$$
which proves the theorem.
\end{pf}

\subsection{Equivalence of (1) and (3)}\label{thm:(3)iff(1)}

\begin{thm}\label{1iff3}
Let $p>0$ and $\alpha>-1$, and let
$f:\mathbb{D}\to\Omega$ be a $K$-quasiconformal homeomorphism.
Then
$$
f\in A_\alpha^p(\mathbb{D})
\quad\Longleftrightarrow\quad
\int_0^\infty r^{p-1}\HH^1(E_r)^{\alpha+2}\,dr<\infty.
$$
\end{thm}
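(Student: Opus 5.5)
The plan is to combine the layer-cake identity of Lemma~\ref{cavalieri} with the two-sided comparison between $|U_r|_\alpha$ and $\HH^1(E_r)^{\alpha+2}$ given in Lemma~\ref{Urestimate}. By Lemma~\ref{cavalieri},
$$
\|f\|_{A^p_\alpha(\D)}^p \sim \int_\D |f(z)|^p(1-|z|)^\alpha\,dA(z)=p\int_0^\infty r^{p-1}|U_r|_\alpha\,dr .
$$
So it suffices to show that
$$
\int_0^\infty r^{p-1}|U_r|_\alpha\,dr<\infty
\quad\Longleftrightarrow\quad
\int_0^\infty r^{p-1}\HH^1(E_r)^{\alpha+2}\,dr<\infty .
$$
Both directions then reduce to a dilation of the variable $r$, plus a trivial treatment of a bounded range of $r$.

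For the direction (3) $\Rightarrow$ (1), I would use the upper bound \eqref{eq:weighted-shifted-superlevel-upper-bound}, valid for every $r>0$, and substitute $s=\Lambda r$:
$$
\int_0^\infty s^{p-1}|U_s|_\alpha\,ds=\Lambda^{p}\int_0^\infty r^{p-1}|U_{\Lambda r}|_\alpha\,dr\le \frac{2\Lambda^p}{\alpha+1}\int_0^\infty r^{p-1}\HH^1(E_r)^{\alpha+2}\,dr .
$$
This gives $f\in A^p_\alpha(\D)$ directly.

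For the direction (1) $\Rightarrow$ (3), I would split the integral at $2R_0$, where $R_0=\max_{|z|=1/2}|f(z)|$. On $(0,2R_0)$ the bound $\HH^1(E_r)\le 2\pi$ gives a contribution of at most $(2\pi)^{\alpha+2}(2R_0)^p/p$. On $(2R_0,\infty)$, write $r=2s$ with $s>R_0$ and apply the lower bound \eqref{eq:weighted-shifted-superlevel-lower-bound}:
$$
\int_{2R_0}^\infty r^{p-1}\HH^1(E_r)^{\alpha+2}\,dr = 2^p\int_{R_0}^\infty s^{p-1}\HH^1(E_{2s})^{\alpha+2}\,ds\le \frac{2^p}{C(K,\alpha)}\int_{R_0}^\infty s^{p-1}|U_s|_\alpha\,ds .
$$
The right-hand side is finite by Lemma~\ref{cavalieri} and the assumption $f\in A^p_\alpha(\D)$.

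The only points needing care are measurability of $r\mapsto \HH^1(E_r)$ and of $r\mapsto|U_r|_\alpha$, which holds because both are monotone in $r$, and the fact that $R_0<\infty$, which follows from continuity of $f$ on the compact circle $\{|z|=1/2\}$. I do not expect a real obstacle here: all the substantive work (the Astala--Koskela density estimate and the modulus/H\"older argument) is already contained in Lemma~\ref{Urestimate}.
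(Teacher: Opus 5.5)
Your proposal is correct and follows essentially the same route as the paper: you combine the layer-cake identity of Lemma~\ref{cavalieri} with the two-sided estimates of Lemma~\ref{Urestimate}, use the dilations $r\mapsto\Lambda r$ and $r\mapsto 2r$, and bound the range $(0,2R_0)$ trivially with $\HH^1(E_r)\le 2\pi$. Your explicit constants $2\Lambda^p/(\alpha+1)$ and $2^p/C(K,\alpha)$ are concrete values for the paper's unspecified $C_1,C_2$.
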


\begin{pf}
The claim follows from Lemma \ref{cavalieri} together with Lemma \ref{Urestimate}. Indeed, by integrating \eqref{eq:weighted-shifted-superlevel-upper-bound}
and \eqref{eq:weighted-shifted-superlevel-lower-bound}
against $r^{p-1}\,dr$, a change of variables gives
$$\int_0^\infty r^{p-1}|U_r|_\alpha\,dr
\le
C_1\int_0^\infty r^{p-1}\HH^1(E_r)^{\alpha+2}\,dr,$$
and 
$$\int_{2R_0}^\infty r^{p-1}\HH^1(E_r)^{\alpha+2}\,dr
\le
C_2\int_{R_0}^\infty r^{p-1}|U_r|_\alpha\,dr,$$
where $C_1,C_2\ge 1$ depend only on $K,p,\alpha$, and $R_0:=\max_{|z|=\frac{1}{2}}|f(z)|$.
Since $\HH^1(E_r)\le2\pi$ and $p>0$, we have
\begin{equation}\label{int02R0}
    \int_{0}^{2R_0} r^{p-1}\HH^1(E_r)^{\alpha+2}\,dr\le \frac{(2\pi)^{\alpha+2}\cdot (2R_0)^p}{p}<\infty.
\end{equation}

Combining the above estimates, we conclude that
$$
\int_0^\infty r^{p-1}|U_r|_\alpha\,dr<\infty
\quad\Longleftrightarrow\quad
\int_0^\infty r^{p-1}\HH^1(E_r)^{\alpha+2}\,dr<\infty.
$$
The claim now follows from Lemma \ref{cavalieri}.
\end{pf}

\subsection{Equivalence of (2) and (3)}\label{sect:areacharact}

\begin{thm}\label{2dimcharacterization}
Let $p>0$ and $\alpha>-1$, and let
$f:\mathbb D\to\Omega$ be a $K$-quasiconformal
homeomorphism. Then,
$$ \int_0^\infty r^{p-1}|U_r|^{\frac{\alpha+2}{2}}\,dr<\infty
\quad\Longleftrightarrow\quad
\int_0^\infty r^{p-1}\HH^1(E_r)^{\alpha+2}\,dr<\infty.$$
\end{thm}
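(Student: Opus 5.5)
The plan is to compare the unweighted area $|U_r|=|U_r|_0$ with $\HH^1(E_r)$ pointwise in $r$, exactly as Theorem~\ref{1iff3} was deduced from Lemma~\ref{Urestimate}. The point is that Lemma~\ref{Urestimate} holds for every $\alpha>-1$, in particular for $\alpha=0$. Applied with $\alpha=0$, it gives two bounds:
$$
|U_{\Lambda r}|\le 2\,\HH^1(E_r)^2\quad (r>0),
\qquad
|U_r|\ge C(K,0)\,\HH^1(E_{2r})^2\quad (r>R_0).
$$
Raising both to the power $\frac{\alpha+2}{2}>0$, I obtain
$$
|U_{\Lambda r}|^{\frac{\alpha+2}{2}}\le 2^{\frac{\alpha+2}{2}}\HH^1(E_r)^{\alpha+2},
\qquad
\HH^1(E_{2r})^{\alpha+2}\le C(K,0)^{-\frac{\alpha+2}{2}}|U_r|^{\frac{\alpha+2}{2}}\quad (r>R_0).
$$

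For the implication $(3)\Rightarrow(2)$, I integrate the first inequality against $r^{p-1}\,dr$ over $(0,\infty)$. The substitution $s=\Lambda r$ then gives
$$
\int_0^\infty s^{p-1}|U_s|^{\frac{\alpha+2}{2}}\,ds\le \Lambda^{p}2^{\frac{\alpha+2}{2}}\int_0^\infty r^{p-1}\HH^1(E_r)^{\alpha+2}\,dr .
$$
For the converse $(2)\Rightarrow(3)$, I integrate the second inequality over $(R_0,\infty)$ and substitute $s=2r$. This bounds $\int_{2R_0}^\infty s^{p-1}\HH^1(E_s)^{\alpha+2}\,ds$ by a constant $C(K,p,\alpha)$ times $\int_{R_0}^\infty r^{p-1}|U_r|^{\frac{\alpha+2}{2}}\,dr$. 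The remaining range $(0,2R_0)$ is handled by \eqref{int02R0}, since $\HH^1(E_r)\le 2\pi$ and $p>0$; this integral is finite automatically.

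The argument therefore reduces entirely to Lemma~\ref{Urestimate} with $\alpha=0$. There is no genuine obstacle; the only care needed is the bookkeeping of the threshold $R_0$ and the dilation factors $\Lambda$ and $2$ under the change of variables. It also helps to note that $|U_r|\le\pi$, so the integrand in (2) is finite on bounded ranges of $r$. Both directions hold with constants depending only on $K$, $p$ and $\alpha$.
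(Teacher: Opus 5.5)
Your proposal is correct and follows the paper's proof essentially step for step. You apply Lemma~\ref{Urestimate} with $\alpha=0$, raise both bounds to the power $\frac{\alpha+2}{2}$, integrate against $r^{p-1}\,dr$ with the substitutions $s=\Lambda r$ and $s=2r$, and handle the range $(0,2R_0)$ via \eqref{int02R0}. Your constants $\Lambda^{p}2^{\frac{\alpha+2}{2}}$ and $2^{p}C(K,0)^{-\frac{\alpha+2}{2}}$ are the paper's $C_1$ and $C_2$ written out explicitly.
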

                             
\begin{proof}

Applying Lemma \ref{Urestimate} with $\alpha=0$, we obtain
$$|U_{\Lambda r}|\le 2\HH^1(E_r)^2 \quad \text{for every } \,\,r>0,$$
and
$$|U_{r}|\ge C(K)\HH^1(E_{2r})^2
	\quad \text{for every }\,\, r>R_0,$$
where $\Lambda=\Lambda(K)>1$, $C(K)>0$ and $R_0:=\max_{|z|=\frac{1}{2}}|f(z)|$. Raising both inequalities to the power $(\alpha+2)/2$, integrating against $r^{p-1}\,dr$ and using a change of variables, we obtain
$$\int_0^\infty r^{p-1}|U_r|^{\frac{\alpha+2}{2}}\,dr
	\le
	C_1\int_0^\infty r^{p-1}\HH^1(E_r)^{\alpha+2}\,dr,$$
    and
    $$\int_{2R_0}^\infty r^{p-1}\HH^1(E_r)^{\alpha+2}\,dr
	\le
	C_2\int_{R_0}^\infty r^{p-1}|U_r|^{\frac{\alpha+2}{2}}\,dr,$$
where $C_1,C_2\ge 1$ depend only on $K,p,\alpha$. 
Combining these estimates with \eqref{int02R0} proves the claim.
\end{proof}

\subsection{Equivalence of (1) and (4)}\label{equ145}

We now establish an energy characterization of quasiconformal mappings in weighted $L^p$ spaces, which is also of independent interest.

\begin{thm}\label{thm:f-iff-a_f}
Let $p>0$ and $\alpha>-1$, and let $f:\mathbb{D}\to \Omega$ be a $K$-quasiconformal homeomorphism. Then
$$  \int_\D
|f(z)|^{p}(1-|z|)^{\alpha}\,dA(z)<\infty \iff \int_\D a_f(z)^{p}(1-|z|)^{p+\alpha}\,dA(z)<\infty.$$
\end{thm}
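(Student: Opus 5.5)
The plan is to pass through the maximum modulus integral of Lemma~\ref{lem:BKL}. That lemma already gives
$$\int_0^1 M(r)^p(1-r)^{\alpha+1}\,dr<\infty \iff \int_\D a_f^p(1-|z|)^{p+\alpha}\,dA<\infty .$$
So it suffices to show
$$f\in A^p_\alpha(\D)\iff \int_0^1 M(r)^p(1-r)^{\alpha+1}\,dr<\infty .$$

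\emph{Easy direction ($\Leftarrow$).} In polar coordinates,
$$\int_\D |f|^p(1-|z|)^\alpha\,dA\le 2\pi\int_0^1 M(r)^p(1-r)^{\alpha}\,r\,dr .$$
This has weight $(1-r)^\alpha$, which is one power too large compared with $(1-r)^{\alpha+1}$, so the crude bound does not close. Instead I would work with $a_f$ directly and avoid $M$. By Lemma~\ref{lem:koebe}, $a_f(z)(1-|z|)\ge \dist(f(z),\partial\Omega)/C$.

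Write $d(z)=\dist(f(z),\partial\Omega)$. Then
$$|f(z)|\le |f(0)|+\dist(0,\partial\Omega)+|f(z)-y_0|,$$
where $y_0\in\partial\Omega$ is a nearest boundary point to $0$. Pointwise $d(z)$ does not control $|f(z)-y_0|$, so I would argue with a Whitney decomposition. For a Whitney square $Q$ at height $1-|z|\sim 2^{-n}$, quasisymmetry of $f$ on hyperbolic balls gives $\operatorname{diam} f(Q)\sim d(z)$. Moreover $|f(z)|$ is bounded by the sum of $\operatorname{diam} f(Q_j)$ along the chain of Whitney squares from $0$ to $z$.

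The chain sums decay geometrically: for quasiconformal $f$, the quantity $d$ along the chain satisfies
$$d(Q_j)\le C\,\beta^{n-j}\,d(Q_n)^{\text{?}} \quad\text{is false in general,}$$
so I would instead use Hardy-type inequalities. Set $D_n(\theta)=d$ on the square at level $n$ above $e^{i\theta}$. Then $|f|$ at level $n$ is at most $C\sum_{j\le n}D_j$. Raising to the power $p$ and summing against the weights $2^{-n(\alpha+2)}$ is handled by a discrete Hardy inequality, using $\alpha+2>1$ and, for $p<1$, the subadditivity of $t\mapsto t^p$. This gives
$$\int_\D |f|^p(1-|z|)^\alpha\,dA\lesssim C+\int_\D d(z)^p(1-|z|)^{\alpha}\,dA\lesssim C+\int_\D a_f^p(1-|z|)^{p+\alpha}\,dA .$$

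\emph{Converse ($\Rightarrow$).} Here I would use $d(z)\le |f(z)|+\dist(0,\partial\Omega)$ together with Lemma~\ref{lem:koebe} in the other direction:
$$a_f(z)(1-|z|)\le C\,d(z)\le C\bigl(|f(z)|+d_0\bigr).$$
Raising to the power $p$ and integrating against $(1-|z|)^\alpha$, the right side is finite because $f\in A^p_\alpha$ and $(1-|z|)^\alpha$ is integrable for $\alpha>-1$. This direction is immediate.

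\emph{Main obstacle.} The hard step is the Hardy-inequality argument in the $\Leftarrow$ direction. It needs a comparison $|f(z)-f(z')|\lesssim d(z)$ for adjacent Whitney squares; this follows from the quasiconformal Koebe distortion and quasisymmetry on hyperbolic disks. It also needs a careful discrete Hardy inequality for all $p>0$. If this becomes messy, a fallback is to use the route through Lemma~\ref{lem:BKL} plus the estimate $M(r)\lesssim M(\tfrac{1+r}{2})$-averages of $|f|$ over the annulus, with an approximate growth regularity of $M$ for quasiconformal maps. That would convert $\int M^p(1-r)^{\alpha+1}$ into $\int_\D |f|^p(1-|z|)^\alpha$.
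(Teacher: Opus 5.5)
Your argument is correct. For the $(\Leftarrow)$ direction it takes a genuinely different route from the paper, while your $(\Rightarrow)$ direction (Lemma~\ref{lem:koebe} plus $\dist(f(z),\partial\Omega)\le|f(z)|+d_0$) is exactly the paper's.

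For $(\Leftarrow)$ the paper uses no chain argument at all. It invokes Lemma~\ref{lem:BKL} to obtain $\int_0^1M(s)^p(1-s)^{\alpha+1}\,ds<\infty$. It then uses the modulus bound $\HH^1(E_t)\le\frac{4\pi K}{\log 2}(1-\rho_t)$, where $2M(\rho_t)=t$, together with Tonelli, to get condition (3). It concludes through the already established equivalence (1)$\Leftrightarrow$(3), which rests on the Astala--Koskela estimate (Lemma~\ref{lem:Astala-Koskela-annulus-estimate}) and the tent lemma.

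Your argument bypasses all of that. You bound $|f(z)|\le|f(0)|+C(K)\sum_{j\le n}d(z_j)$ along the dyadic ancestors of $z$. You then apply a discrete Hardy inequality on the dyadic tree, which closes precisely because $\sum_{n\ge j}2^{n-j}2^{-n(\alpha+2)}<\infty$, i.e.\ $\alpha+1>0$. Finally you use $d(z)\le C\,a_f(z)(1-|z|)$. This needs only Lemma~\ref{lem:koebe} and the local distortion of $K$-quasiconformal maps on Whitney squares. It also gives the quantitative bound
\[
\int_\D|f|^p(1-|z|)^\alpha\,dA\le C(K,p,\alpha)\Bigl(|f(0)|^p+\int_\D a_f^p(1-|z|)^{p+\alpha}\,dA\Bigr).
\]
The paper's route, in exchange, ties condition (4) to the maximum-modulus integral and to the boundary superlevel sets $E_r$, in keeping with the rest of Section~\ref{sec:weighted}. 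The cost is its dependence on the external result Lemma~\ref{lem:BKL}.

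When you write this up, make the following changes:
\begin{itemize}
\item Delete the false start (the display with ``?''), the fallback, and the abandoned first attempt via $M$.
\item Relabel the $(\Leftarrow)$ direction: it is the hard one, not the easy one.
\item Spell out the $p>1$ case of the Hardy inequality, using H\"older against a factor $2^{\epsilon(n-j)}$ with $0<\epsilon p<\alpha+1$.
\item Spell out the comparability $d(z)\simeq d(z')$ within a Whitney square. This is what lets you pass from the discrete sum back to $\int_\D d(z)^p(1-|z|)^\alpha\,dA$.
\end{itemize}
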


\begin{pf}
     Suppose $\int_\D
|f(z)|^{p}(1-|z|)^{\alpha}\,dA(z)<\infty$. Choose $y_0\in\partial\Omega$ with $|y_0|=d_0:=\dist(0,\partial\Omega)$. By
		Lemma~\ref{lem:koebe}, we have
		$$
a_f(z)(1-|z|)
\le C(K)\dist(f(z),\partial\Omega)
\le C(K)(|f(z)|+d_0).
		$$
		Consequently, since $\alpha>-1$, 
	$$\int_{\mathbb D}a_f(z)^p(1-|z|)^{p+\alpha}\,dA(z)
\le C(K,p)\int_{\mathbb D}
\bigl(|f(z)|^p+d_0^p\bigr)(1-|z|)^\alpha\,dA(z)
<\infty.
$$

Conversely, by the equivalence of (1) and (3) established in Section \ref{thm:(3)iff(1)}, it suffices to prove that $\int_\D a_f(z)^{p}(1-|z|)^{p+\alpha}\,dA(z)<\infty$ implies (3). By Lemma~\ref{lem:BKL}, we have
\begin{equation}\label{eq:maximum-modulus-integral}
   \int_0^1 M(s)^p(1-s)^{\alpha+1}\,ds<\infty.
\end{equation}
Recall that $R_0:=M(1/2)$ and set $R:=\sup_{\D}|f|$, allowing
$R=\infty$. Since $f$ is continuous and open, $M$ is continuous and strictly increasing.
Thus, for each $2R_0<t<2R$, there is a unique
$\rho_t\in(1/2,1)$ such that
\begin{equation}\label{Mrhot}
    2M(\rho_t)=t.
\end{equation}

Let $\Gamma_t$ be the family of radial segments from $\rho_t\omega$ to $\omega$,
with $\omega\in E_t$. By a direct computation in polar coordinates,
$$\Mod(\Gamma_t)=\frac{\HH^1(E_t)}{\log({1}/{\rho_t})}.$$
Since $|f(\rho_t\omega)|\le t/2$ and $|f(\omega)|>t$, continuity implies that each curve in $f(\Gamma_t)$ contains a subcurve joining the two boundary circles of $A(0;t/2,t)$.
Combining these with $\rho_t\in(1/2,1)$ and the quasiconformality of $f$, we deduce that
\begin{equation}\label{eq:m-r}
	\HH^1(E_t)\le K \Mod(f\Gamma_t)\log\left(1/{\rho_t}\right)
	\le
	\frac{4\pi K}{\log2}(1-\rho_t).
\end{equation}

Note that 
$$(1-\rho_t)^{\alpha+2}
=(\alpha+2)\int_{\rho_t}^1(1-s)^{\alpha+1}\,ds.$$
By \eqref{eq:maximum-modulus-integral}, \eqref{Mrhot}, \eqref{eq:m-r} and Tonelli's theorem, we conclude that
\begin{align*}
	\int_{2R_0}^{2R}
	r^{p-1}\HH^1(E_r)^{\alpha+2}\,dr
	&\le
	C(K,\alpha)\int_{2R_0}^{2R}r^{p-1}
	\int_{\rho_r}^1(1-s)^{\alpha+1}\,ds\,dr\\
	&=
	C(K,\alpha)\int_{1/2}^1(1-s)^{\alpha+1}
	\int_{2R_0}^{2M(s)}r^{p-1}\,dr\,ds\\
	&\le
	C(K,p,\alpha)\int_{1/2}^1
	M(s)^p(1-s)^{\alpha+1}\,ds
	<\infty.
\end{align*}
The integral over $(0,2R_0)$ is finite by \eqref{int02R0}. If $R<\infty$,
then $E_r=\emptyset$ for every $r\ge R$. Hence
$$
\int_0^\infty r^{p-1}\HH^1(E_r)^{\alpha+2}\,dr<\infty.
$$
This is (3) and the proof is complete.
\end{pf}

Lemma~\ref{lem:BKL} and Theorem~\ref{thm:f-iff-a_f} give the following corollary.
\begin{cor}\label{cor:Ap-to-integral-mean}
 Let $p>0$ and $\alpha>-1$, and let $f:\mathbb{D}\to \Omega$ be a $K$-quasiconformal homeomorphism. Then
$$ 
f\in A_{\alpha}^{p} \iff \int_0^1 M(s)^p(1-s)^{\alpha+1}\,ds<\infty.$$
\end{cor}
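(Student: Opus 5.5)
The corollary is a direct concatenation of two equivalences already established, so the plan is simply to chain them. By Theorem~\ref{thm:f-iff-a_f}, for a $K$-quasiconformal homeomorphism $f:\D\to\Omega$ with $p>0$ and $\alpha>-1$, we have
$$f\in A^p_\alpha(\D)\iff \int_\D a_f(z)^p(1-|z|)^{p+\alpha}\,dA(z)<\infty .$$
Here we use that $(1-|z|^2)^\alpha$ and $(1-|z|)^\alpha$ are comparable for $\alpha>-1$, so membership in $A^p_\alpha(\D)$ is equivalent to finiteness of $\int_\D|f|^p(1-|z|)^\alpha\,dA$, as remarked in the introduction.

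Next, by Lemma~\ref{lem:BKL}, applied with the same $p$ and $\alpha$, the finiteness of $\int_\D a_f^p(1-|z|)^{p+\alpha}\,dA$ is equivalent to
$$\int_0^1 M(s)^p(1-s)^{\alpha+1}\,ds<\infty .$$
Composing the two equivalences yields the claim.

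There is essentially no obstacle. The only point to check is that the exponents match. Lemma~\ref{lem:BKL} pairs the weight $(1-r)^{\alpha+1}$ on $M(r)^p$ with the weight $(1-|z|)^{p+\alpha}$ on $a_f^p$. Theorem~\ref{thm:f-iff-a_f} produces exactly the weight $(1-|z|)^{p+\alpha}$, so no reindexing of $\alpha$ is needed.

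One could alternatively argue directly. For one direction, the proof of Theorem~\ref{thm:f-iff-a_f} already derives (3) from the maximum-modulus integral, and (3) gives (1) by Theorem~\ref{1iff3}. For the other direction, $|f|\le M(|z|)$ in polar coordinates gives $\int_\D|f|^p(1-|z|)^\alpha\,dA\le 2\pi\int_0^1M(s)^p(1-s)^\alpha\,ds$, but this carries the weaker exponent $\alpha$ rather than $\alpha+1$. So the route through Lemma~\ref{lem:BKL} is the right one, and I would present the two-line chaining argument.
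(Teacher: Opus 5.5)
Your proof is correct and is exactly the paper's argument: the corollary is obtained by chaining Theorem~\ref{thm:f-iff-a_f} with Lemma~\ref{lem:BKL}, and the weight $(1-|z|)^{p+\alpha}$ is the same in both, as you checked. One minor point: in your discarded aside, the bound via $|f(z)|\le M(|z|)$ addresses the implication from a maximum-modulus integral to $f\in A^p_\alpha$, not the converse, but this does not affect the main argument.
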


\subsection{(1) implies (5) when $-1<\alpha\leq 0$}\label{1implies6}
\begin{thm}\label{thm:(1)implies(5)}
	Let $p>0$ and $-1<\alpha\leq 0$, and let $f:\mathbb{D}\to \Omega$ be a $K$-quasiconformal homeomorphism. Then
	$$
	f\in A_\alpha^p\quad \implies \quad	\int_0^\infty
	r^{p-1}\HH^1(F_r)^{\alpha+2}\,dr<\infty.
	$$	
\end{thm}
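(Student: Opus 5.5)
The plan is to bound $\HH^1(F_r)$ through the slicing formula of Lemma~\ref{lem:level-set-slicing} and a weighted Cauchy--Schwarz inequality on the circle $S(0,r)$. The weight is chosen so that, after integrating in $r$, one recovers the weighted Dirichlet energy of $g=f^{-1}$ on $\Omega$. That energy is controlled by $\|f\|_{A^p_\alpha}^p$ through the change of variables.

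Write $g=f^{-1}$ and $L(r)=\HH^1(F_r)$ for a.e.\ $r$. Set $d_0=\dist(0,\partial\Omega)$. For $0<r<d_0$ we have $\HH^1(F_r)\le \int_{S(0,r)}|Dg|\,d\HH^1$, and the part of the integral over $(0,d_0)$ can be treated crudely. Concretely, I would apply the argument below to $f-y_0$, or note that for $r<d_0$ the set $F_r$ is a closed curve in a compact subset of $\D$, so that Cauchy--Schwarz and $\int_\Omega|Dg|^2\le K\pi$ give $\int_0^{d_0} r^{p-1}L(r)^{\alpha+2}\,dr<\infty$. (A minor point to check.)

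The main work is for $r>d_0$. Then every component $\gamma$ of $\Omega\cap S(0,r)$ is an open arc whose two ends accumulate on $\partial\Omega$. Hence $g(\gamma)$ is a crosscut of $\D$: both of its ends tend to $\partial\D$, because $g$ is a homeomorphism onto $\D$. So for every $z\in g(\gamma)$, the arc $g(\gamma)$ joins $z$ to $\partial\D$, and $1-|z|\le \HH^1(g(\gamma))\le L(r)$. This is the \emph{key geometric step}, and it replaces the Hayman--Wu estimate used in the conformal case. Using it with $-\alpha\ge0$, Cauchy--Schwarz gives
\begin{align*}
L(r)^2&\le \Bigl(\int_{\Omega\cap S(0,r)}|Dg(y)|^2(1-|g(y)|)^{\alpha}\,d\HH^1(y)\Bigr)\Bigl(\int_{\Omega\cap S(0,r)}(1-|g(y)|)^{-\alpha}\,d\HH^1(y)\Bigr)\\
&\le A(r)\cdot 2\pi r\, L(r)^{-\alpha},
\end{align*}
where $A(r)$ denotes the first factor. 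Since $L(r)<\infty$ for a.e.\ $r$ by Lemma~\ref{lem:level-set-slicing}, this yields $L(r)^{\alpha+2}\le 2\pi r A(r)$. This is exactly where $\alpha\le 0$ is used, since the exponent $-\alpha$ must be nonnegative.

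Integrating in polar coordinates and changing variables with $|Dg(y)|^2\le KJ_g(y)$ then gives
$$\int_{d_0}^\infty r^{p-1}L(r)^{\alpha+2}\,dr\le 2\pi\int_\Omega |y|^p|Dg(y)|^2(1-|g(y)|)^\alpha\,dA(y)\le 2\pi K\int_\D |f(z)|^p(1-|z|)^\alpha\,dA(z)<\infty .$$
The expected obstacles are the following. First, one must justify the crosscut claim carefully: that both ends of $g(\gamma)$ leave every compact subset of $\D$, and that the length of a rectifiable curve from $z$ to $\partial\D$ dominates $1-|z|$. Second, one must handle measurability and a.e.\ issues, which Lemma~\ref{lem:level-set-slicing} provides. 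Everything else is routine.
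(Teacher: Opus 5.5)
Your proposal is correct and follows essentially the same route as the paper. It uses the same three ingredients: the weighted Cauchy--Schwarz inequality with $\mathcal A(r)=\int_{\Omega\cap S(0,r)}|Dg|^2(1-|g|)^\alpha\,d\HH^1$, the crosscut bound $1-|g(w)|\le L(r)$ once $r$ exceeds the modulus of a boundary point, and the change of variables $\int_0^\infty r^p\mathcal A(r)\,dr\le K\|f\|_{A^p_\alpha}^p$. For the small radii, drop the option of passing to $f-y_0$, since its level sets are preimages of circles centered at $y_0$ rather than the sets $F_r$; your second option works once made precise as in the paper, via $L(r)^{\alpha+2}\le 1+L(r)^2\le 1+2\pi r\int_{\Omega\cap S(0,r)}|Dg|^2\,d\HH^1$, which uses $\alpha+2\le 2$.
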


\begin{pf}
	Suppose that $f\in A_\alpha^p(\D)$. Let $g=f^{-1}$,
	and let $L$ be the measurable representative of $r\mapsto\HH^1(F_r)$ defined in Lemma \ref{lem:level-set-slicing}. 
	For $r>0$, define
	$$
	\mathcal A(r):=
	\int_{\Omega\cap S(0,r)}
	|Dg(w)|^2(1-|g(w)|)^\alpha\,d\HH^1(w).
	$$
	By polar coordinates, the quasiconformality of $g$,
	and a change of variables, 
	\begin{align}
		\int_0^\infty r^p\mathcal A(r)\,dr
		&=
		\int_\Omega
		|w|^p|Dg(w)|^2(1-|g(w)|)^\alpha\,dA(w)
		\notag\\
		&\le
		K\int_\D
		|f(z)|^p(1-|z|)^\alpha\,dA(z)
		<\infty.
		\label{eq:weighted-inverse-energy}
	\end{align}
    In particular, $A(r)<\infty$ for almost every $r>0$. By Cauchy–Schwarz inequality and $\alpha\le0$, it follows that
	\begin{equation}\label{eq:weighted-level-CS}
		L(r)^2
		\le
		\mathcal A(r)
		\int_{\Omega\cap S(0,r)}
		(1-|g(w)|)^{-\alpha}\,d\HH^1(w)\le2\pi r\mathcal A(r)
	\end{equation}
	for almost every $r>0$.

	Choose $y\in\partial\Omega$ and fix $R>|y|$.
	For $r>R$, the circle $S(0,r)$ cannot be contained
	in $\Omega$: otherwise, simply connectivity would imply
	$B(0,r)\subset\Omega$, contradicting
	$y\in\partial\Omega$.
	Thus each component $\sigma$ of $\Omega\cap S(0,r)$ is an open circular arc with endpoints in $\partial\Omega$.

    As $w$ approaches either endpoint along $\sigma$, we have $|g(w)|\to 1$; otherwise, compactness and the continuity of $f$ would imply that the limiting endpoint belongs to $\Omega$, a contradiction.
    Consequently, for almost every $r>R$, Lemma~\ref{lem:level-set-slicing} gives
    $$1-|z| \le\mathcal H^1(g(\sigma)) \le L(r) \quad \text{for every}\,\,\,z\in g(\sigma).$$
   Since this estimate holds for every component of $\Omega\cap S(0,r)$, we have
   $$1-|g(w)|\le L(r) \quad \text{for every}\,\,\,w\in \Omega\cap S(0,r).$$
   For $L(r)>0$, combining this with \eqref{eq:weighted-level-CS} and $\alpha\le 0$, it follows that
	$$
	L(r)^2
	\le
	2\pi r\mathcal A(r)L(r)^{-\alpha}.
	$$
	Hence
	\begin{equation*}
		L(r)^{\alpha+2}
		\le2\pi r\mathcal A(r)\quad \text{for almost every}\,\,\, r>R.
	\end{equation*}
 The same inequality is immediate
	when $L(r)=0$.

    For $0<r<R$, by \eqref{eq:weighted-level-CS} and the assumption $1<\alpha+2\le2$, we obtain
	$$
	L(r)^{\alpha+2} \le 1+L(r)^2
	\le1+2\pi r\mathcal A(r) \quad \text{for almost every}\,\,\,0<r<R.
	$$
	 Combining these two estimates with Lemma \ref{lem:level-set-slicing} and
	\eqref{eq:weighted-inverse-energy}, we conclude that
	\begin{align*}
		\int_0^\infty
		r^{p-1}\HH^1(F_r)^{\alpha+2}\,dr
		&=
		\int_0^\infty r^{p-1}L(r)^{\alpha+2}\,dr\\
		&\le
		\frac{R^p}{p}
		+2\pi\int_0^\infty r^p\mathcal A(r)\,dr\\
		&\le
		\frac{R^p}{p}
		+2\pi K\int_\D
		|f(z)|^p(1-|z|)^\alpha\,dA(z)
		<\infty.
	\end{align*}
    This completes the proof.
\end{pf}

\begin{pf}[Proof of Theorem \ref{mainpositive}]
    The claim follows by combining Theorems \ref{thm:(5)implies(3)}- \ref{thm:(1)implies(5)}.
\end{pf}

\section{Proof of Theorem~\ref{thm:weighted-counterexample}}\label{Example}

We prove Theorem~\ref{thm:weighted-counterexample} using a homogeneous
Rohde snowflake as in \cite[Section 3]{HM}. The idea is to construct a quasiconformal map whose
images of concentric circles have lengths growing at least at a
prescribed rate.
After a similarity normalization, the inverse map turns these curves
into the level sets of a bounded quasiconformal mapping.

\subsection{A snowflake parametrization and its quasiconformal extension}

    Fix $\alpha>0$ and let
    \begin{equation}\label{settingofcounter}
        q:=\alpha+2,\quad
    \lambda:=\frac1q,\quad
    \beta:=1-\lambda,\quad
    \rho:=4^{-\beta}.
    \end{equation}
    Then $0<\lambda<1/2$, $1/2<\beta<1$, and $1/4<\rho<1/2$.

    We start with the unit square and apply the standard outward
Rohde snowflake replacement to every edge at every stage, with parameter
$\rho$; see \cite[Section 4.3]{HM}. Each edge is replaced by four edges of
length $\rho$ times the parent length, and all existing vertices are
preserved. Let $\Gamma_\rho$ denote the limiting Jordan curve.
After $n\ge0$ replacements, the approximating polygon has $4^{n+1}$
edges, each of length $\rho^n$.

Let $\mathcal J_m$ be the standard subdivision of $\partial\mathbb D$
into $4^m$ closed arcs of equal normalized arclength, and let
$h:\partial\mathbb D\to\Gamma_\rho$ be the positively oriented natural
parametrization. Thus the four arcs in $\mathcal J_1$ correspond to the
four sides of the initial square, and subsequent subdivisions correspond
to successive replacements. The construction in \cite[Section 4.3]{HM}
gives
\begin{equation}\label{diamunderh}
    \operatorname{diam}h(I)=\rho^{m-1},
    \quad \text{for}\,\,\, I\in\mathcal J_m,\quad m\ge1.
\end{equation}
Moreover, since $1/4<\rho<1/2$, the estimates in \cite[Section 4.3]{HM} show that $\Gamma_\rho$ is of bounded turning, i.e., there is a constant $B_\rho<\infty$ such
that, for any two points on $\Gamma_\rho$, the smaller of the diameters
of the two joining subarcs is at most $B_\rho$ times their Euclidean distance.

We claim that 
\begin{equation}\label{eq:example-holder-parametrization}
    c_\rho|\xi_1-\xi_2|^\beta
    \le |h(\xi_1)-h(\xi_2)|
    \le C_\rho|\xi_1-\xi_2|^\beta,
    \quad \xi_1,\xi_2\in\partial\mathbb D,
\end{equation}
for some constants $C_\rho>c_\rho>0$. Indeed, let $s>0$ be the normalized length of the shorter circular arc joining $\xi_1$ and $\xi_2$, and choose $m\ge1$ with $4^{-m}\le s<4^{-m+1}$.
The shorter joining arc is covered by at most five consecutive arcs
in $\mathcal J_m$, while each of the two joining arcs
contains an arc in $\mathcal J_{m+1}$.
Thus \eqref{diamunderh} and bounded turning give
$$
B_\rho^{-1}\rho^m
\le |h(\xi)-h(\zeta)|
\le 5\rho^{m-1}.
$$
Since $\rho=4^{-\beta}$ and
$s\le|\xi_1-\xi_2|\le 2\pi s$, this proves
\eqref{eq:example-holder-parametrization}.
Consequently, $h:(\partial\D,|\cdot|)\to(\Gamma_\rho,|\cdot|)$
is quasisymmetric, with quasisymmetry constant depending only on $\rho$.

Now we extend $h:\partial\mathbb D\to\Gamma_\rho$ to an orientation-preserving quasiconformal homeomorphism $\Phi:\mathbb C\to\mathbb C$. We first apply M\"obius transformations to send one
point on $\partial\D$ and its image to $\infty$. Then Tukia's extension theorem \cite{Tukia} gives an orientation-preserving quasiconformal extension
$\widetilde\Phi:\hat{\mathbb C}\to\hat{\mathbb C}$.
Because $h$ is positively oriented, $\widetilde\Phi$ maps
$\mathbb D$ onto the bounded component of
$\mathbb C\setminus\Gamma_\rho$. Thus
$a:=\widetilde\Phi^{-1}(\infty)$ lies in
$\hat{\mathbb C}\setminus\overline{\mathbb D}$.

Next we
choose an orientation-preserving smooth diffeomorphism $T:\hat{\mathbb C}\to\hat{\mathbb C}$ that is the identity on a neighborhood of
$\overline{\mathbb D}$ and satisfies $T(\infty)=a$.
Then $\Phi:=\widetilde\Phi\circ T$ satisfies $\Phi|_{\partial\D}=h$ and $\Phi(\infty)=\infty$, which gives us an orientation-preserving quasiconformal homeomorphism $\Phi:\mathbb C\to\mathbb C$. In particular, by \cite[Section 2.3]{tukia1980quasisymmetric}, $\Phi$ is $\eta$-quasisymmetric for some distortion
function $\eta:[0,\infty)\to [0,\infty)$.

\subsection{Length estimates for images of concentric circles}

For each $n\ge0$, let $N_n:=4^{n+1}$.
We denote the cyclically ordered endpoints of the arcs in
$\mathcal J_{n+1}$ by $\omega_{n,1},\ldots,\omega_{n,N_n}$. Set $x_{n,j}:=h(\omega_{n,j})$ for $1\le j\le N_n$, and adopt
the cyclic conventions
$\omega_{n,N_n+1}:=\omega_{n,1}$ and
$x_{n,N_n+1}:=x_{n,1}$.
Since the replacement procedure preserves the vertices, 
$$ |x_{n,j+1}-x_{n,j}|=\rho^n, 
    \quad
    |\omega_{n,j+1}-\omega_{n,j}|
    =2\sin\frac{\pi}{N_n}\ge \frac4{N_n}=4^{-n}.$$
Choose $0<\tau\le\frac{1}{2}$ such that $\eta(\tau)<\frac{1}{4}$.
If $0<r<1$ and $1-r\le\tau4^{-n}$, the quasisymmetry of $\Phi$ yields that
$$\frac{|\Phi(r\omega_{n,j})-x_{n,j}|}{\rho^n}
    \le
    \eta\!\left(
    \frac{1-r}{|\omega_{n,j+1}-\omega_{n,j}|}
    \right)
    \le\eta(\tau)<\frac14.$$
Applying this for consecutive vertices, by the triangle inequality, we obtain
$$  |\Phi(r\omega_{n,j+1})-\Phi(r\omega_{n,j})|
    \ge\frac12\rho^n.$$

Since $\Phi$ is a homeomorphism, the points $\Phi(r\omega_{n,j}), \,\,j=1,2,\cdots,N_n$, divide the Jordan curve $\Phi(r\partial\mathbb D)$
into subarcs that intersect only at their endpoints. Each subarc has
$\mathcal H^1$-measure at least the distance between its endpoints. Therefore,
\begin{equation}\label{H1lower}
    \mathcal H^1\bigl(\Phi(r\partial\mathbb D)\bigr)
    \ge\sum_{j=1}^{N_n}
    |\Phi(r\omega_{n,j+1})-\Phi(r\omega_{n,j})|
    \ge2(4\rho)^n.
\end{equation}
For any $r>0$ with $1-\tau<r<1$, choose $n_r\ge0$ such that
$$   \tau4^{-(n_r+1)}<1-r\le\tau4^{-n_r}.$$
Combing with \eqref{settingofcounter} and \eqref{H1lower}, we have
\begin{equation}\label{eq:weighted-example-length'}
    \mathcal H^1\bigl(\Phi(r\partial\mathbb D)\bigr)
    \ge 2\cdot4^{-\lambda}\tau^{\lambda}(1-r)^{-\lambda},
    \quad \text{for}\,\,\,1-\tau<r<1.
\end{equation}

\subsection{ Proof of Theorem \ref{thm:weighted-counterexample}.} Since $\Phi(\overline{\mathbb D})\subset {\mathbb C}$ is a compact set, we can postcompose $\Phi$ with an orientation-preserving similarity such that
$$ \Phi(0)=0
    \quad\text{and}\quad
    \Phi(\overline{\mathbb D})\Subset\mathbb D.$$
The similarity changes only the positive constant in
\eqref{eq:weighted-example-length'}. Retaining the notation $\Phi$, we define
$$ \Omega_\alpha:=\Phi^{-1}(\mathbb D)
    \quad\text{and}\quad
    f_\alpha:=\Phi^{-1}|_{\mathbb D}:\D\to \Omega_\alpha.$$
Then $f_\alpha$ is an
orientation-preserving quasiconformal homeomorphism with
$f_\alpha(0)=0$. It is also bounded because
$\Phi^{-1}(\overline{\mathbb D})$ is compact.
Moreover, for every $r>0$, we have
$$  F_r:=\{z\in\D:|f_\alpha(z)|=r\}
    =\D\cap\Phi(r\partial\mathbb D).$$
In particular, the assumption $ \Phi(\overline{\mathbb D})\Subset\mathbb D$ ensures that
\begin{equation}\label{eq:example-level-sets}
    F_r=\Phi(r\partial\mathbb D),\quad\text{for every}\,\, 0<r<1.
\end{equation}
The function $r\mapsto\mathcal H^1(F_r)$ is lower semicontinuous
and hence measurable.

Let $p>0$. Since $1-\tau\ge1/2$, the function
$r^{p-1}$ is bounded below by a positive constant on
$(1-\tau,1)$. Using \eqref{eq:weighted-example-length'},
\eqref{eq:example-level-sets}, and
$\lambda(\alpha+2)=1$, we conclude that
$$\begin{aligned}
    \int_0^\infty r^{p-1}\mathcal H^1(F_r)^{\alpha+2}\,dr
    &\ge C_{p,\alpha}\int_{1-\tau}^1(1-r)^{-\lambda ({\alpha+2})}\,dr\\
    &=C_{p,\alpha}\int_{1-\tau}^1\frac{dr}{1-r}
    =+\infty.
\end{aligned}$$

On the other hand, boundedness of $f_\alpha$ and
$\alpha>-1$ imply that $f_\alpha\in A_\alpha^p(\D)$. Since the construction is independent of $p$, the same mapping $f_\alpha$ works for every $p>0$.\qed

\begin{rem}
The restriction $\alpha>0$ arises naturally from the construction.
Indeed, the snowflake parameter $\rho=4^{\lambda-1}$ must satisfy
$1/4<\rho<1/2$, which is equivalent to $\alpha>0$.
\end{rem}

\begin{rem}\label{rem:K-blowup}
The maximal dilatations of the constructed maps satisfy
$$
K(\Phi)\ge K(f_\alpha)
\ge
\frac{\sqrt{\alpha+1}+1}{\sqrt{\alpha+1}-1}
\longrightarrow\infty
\quad\text{as }\alpha\downarrow0.
$$

Indeed, \eqref{eq:example-holder-parametrization} shows
that $h$ is bi-Lipschitz from
$(\partial\D,|\cdot|^\beta)$ onto
$(\Gamma_\rho,|\cdot|)$.
Since the subsequent similarity normalization preserves
Hausdorff dimension,
\begin{equation}\label{eq:snowflake-HD}
\dim_H\Phi(\partial\D)
=
\dim_H\Gamma_\rho
=
\frac1\beta
=
\frac{\alpha+2}{\alpha+1}.
\end{equation}

Recall that $\overline{\D}\Subset\Omega_\alpha$ and
$f_\alpha^{-1}=\Phi|_{\Omega_\alpha}$.
Extend the Beltrami coefficient of
$\Phi|_{\Omega_\alpha}$ by zero outside $\Omega_\alpha$.
The measurable Riemann mapping theorem gives a global
quasiconformal homeomorphism $\Psi:\mathbb C\to\mathbb C$ with the same Beltrami coefficient and
$$
K(\Psi)
=
K(\Phi|_{\Omega_\alpha})
=
K(f_\alpha).
$$
Since $\Phi$ and $\Psi$ have the same Beltrami coefficient
on $\Omega_\alpha$, the map
$H:=\Phi\circ\Psi^{-1}$ is conformal on
$\Psi(\Omega_\alpha)$.
In particular, it is locally bi-Lipschitz near the compact
curve $\Psi(\partial\D)$ and therefore preserves its
Hausdorff dimension.

Consequently, \eqref{eq:snowflake-HD} and Smirnov's
quasicircle dimension theorem
\cite[Theorem~1]{Smirnov} yield
$$
\frac{\alpha+2}{\alpha+1}
=
\dim_H\Phi(\partial\D)
=
\dim_H\Psi(\partial\D)
\le
1+
\left(
\frac{K(f_\alpha)-1}{K(f_\alpha)+1}
\right)^2.
$$
Rearranging gives the asserted lower bound for
$K(f_\alpha)$, and the bound for $K(\Phi)$ follows by
restriction and invariance of maximal dilatation under
inversion.

\end{rem}

\begin{rem}\label{rem:Hayman-Wu-and-Hausdorff-to-harmonic}
The failure of the Hayman--Wu bound in the quasiconformal setting was established by \cite[Theorem 1.1]{AstalaFernandezRohde1993}, where the counterexample yields an infinite sum of component diameters. Our snowflake construction exhibits a different way of the failure: the level sets are Jordan curves whose lengths are unbounded, although they are rectifiable for almost every radius.

Indeed, $\Phi(\partial \D)$ is a snowflake of Hausdorff dimension greater than $1$, hence $\HH^1(\Phi(\partial \D))=\infty$. As $r\uparrow 1$, the curves $F_r=\Phi(r\partial\D)=\partial \Phi(r\D)$ converge to this snowflake, this gives $\HH^1(F_r)\to\infty$, whereas $\HH^1(F_r)<\infty$ for almost every $0<r<1$.
Thus, no finite uniform length bound exists even for this single mapping.

Moreover, since $\Phi(0)=0$ and $ \Phi(\overline{\mathbb D})\Subset\mathbb D$, the curve $F_r=\partial\Phi(r\D)$
surrounds $0$ and separates it from $\partial\D$ for every $0<r<1$. 
Therefore, we have $\omega_\D(0,F_r)=1$. The divergence of $\HH^1(F_r)$ also shows the failure of 
\eqref{eq:Hausdorff-to-harmonic}, even with a 
constant depending on the mapping.
\end{rem}

	\bibliographystyle{amsalpha}
\bibliography{bibliography}

\end{document}